\newtheorem{theorem}{Theorem}[section]
\newtheorem{lemma}[theorem]{Lemma}
\theoremstyle{definition}
\newtheorem{definition}[theorem]{Definition}
\newtheorem{remark}{Remark}
\newcommand{\di}[1]{\,\mathrm{d}#1}
\newcommand{\jump}[1]{\llbracket #1\rrbracket}
\newcommand{\R}{\mathbb{R}}
\newcommand{\A}{\mathbb{A}}
\newcommand{\Q}{\mathbb{Q}}
\newcommand{\N}{\mathbb{N}}
\newcommand{\Z}{\mathbb{Z}}
\newcommand{\CC}{\mathcal{C}}
\newcommand{\per}{{\overline{\#}}}
\newcommand{\dive}{\operatorname{div}}
\newcommand{\Sym}{\operatorname{Sym}}
\newcommand{\twosc}{\stackrel{2}{\rightarrow}}
\newcommand{\stwosc}{\stackrel{2-str.}{\longrightarrow}}
\newcommand{\e}{\varepsilon}
\newcommand{\p}{\varphi}
\newcommand{\dx}{\,\mathrm{d}x}
\newcommand{\ds}{\,\mathrm{d}s}
\newcommand{\dt}{\,\mathrm{d}t}
\newcommand{\ddt}{\frac{\operatorname{d}}{\mathrm{d}t}}
\newcommand{\dy}{\,\mathrm{d}y}
\newcommand{\dtau}{\,\mathrm{d}\tau}
\newcommand{\exe}{\left(x,\frac{x}{\e}\right)}
\title[Homogenization of a Thermoelasticity Problem] 
      {Homogenization of a Fully Coupled Thermoelasticity Problem for a Highly Heterogeneous Medium With {\em a Priori} Known Phase Transformations}
\author[Michael Eden, Adrian Muntean]{}
\subjclass{Primary: 35B27; Secondary: 74F05, 74Q15.}
 \keywords{Homogenization, two-phase thermoelasticity, two-scale convergence, time-dependent domains, distributed microstructures.}
 \email{leachim@math.uni-bremen.de}
 \email{adrian.muntean@kau.se}
\begin{document}
\maketitle

\centerline{\scshape Michael Eden}
\medskip
{\footnotesize
 \centerline{Center for Industrial Mathematics, FB 3}
   \centerline{University of Bremen, Germany}
   \centerline{ Bibliotheksstr.~1, 28359, Bremen}
} 

\medskip

\centerline{\scshape Adrian Muntean
}
\medskip
{\footnotesize
 \centerline{Department of Mathematics and Computer Science}
   \centerline{University of Karlstad, Sweden}
   \centerline{Universitetsgatan 2, 651 88 Karlstad}
}

\bigskip

\begin{abstract}
We investigate a linear, fully coupled thermoelasticity problem for a highly heterogeneous, two-phase medium.
The medium in question consists of a connected matrix with disconnected, initially periodically distributed inclusions separated by a sharp interface undergoing an {\em a priori} known interface movement due to phase transformations.
After transforming the moving geometry to an $\e$-periodic, fixed reference domain, we establish the well-posedness of the model and derive a number of $\e$-independent {\em a priori} estimates.
Via a two-scale convergence argument, we then show that the $\e$-dependent solutions converge to solutions of a corresponding upscaled model with distributed time-dependent microstructures.
\end{abstract}

\section{Introduction}
In this paper, we consider a heterogeneous medium where the two building  components are different solid phases of the same material (like  \emph{austenite} and \emph{bainite} phases in steel, e.g.) separated by a sharp interface. 
One phase is assumed to be a connected matrix in which finely interwoven, periodically distributed inclusion of the second phase (which is therefore disconnected) are embedded. We refer to these phases as microstructures. 

Our interest is the case where phase transformations are possible, e.g., one phase might grow at the expanse of the other phase, thereby leading to an interface movement and, as a consequence, time dependent domains that are not necessarily periodic anymore.
However, we assume to have {\em a priori} knowledge of the phase transformation, i.e., the movement of the interface is prescribed.
For a rather general modeling of phase transformations (including a possible mathematical treatment), we refer the reader  to~\cite{V96}, and for the metallurgical perspective on phase transformation in steel (especially, w.r.t~the bainite transformation), we refer to~\cite{F13, PE15, S08}.
Looking at such a highly-heterogeneous medium, we study  the coupling between the mechanics of the material and the thermal conduction effect (\emph{thermomechanics}) under the influence of the phase transformation.
In particular, we explore the interplay between \emph{surface stresses} and \emph{latent heat}, see for instance~\cite{K79} for related  thermoelasticity scenarios.
In this work, we assume the quasi-static assumption to hold; that is the mechanical processes are reversible.
Furthermore, the constitutive laws are taken to be linear.
Our main contribution here is the treatment of the mechanical dissipation and of {\em a priori} prescribed phase transformations in the thermoelasticity setting.

It is worth noting that the homogenization of different thermoelasticity problems has already been addressed in the  literature.
In one of the earlier works,~\cite{F83}, a one-phase linear thermoelasticity problem is homogenized via a semi-group approach.
In~\cite{T11}, a formal homogenization via asymptotic expansion for a similar model (but for a one-dimensional geometry) was conducted.
A two-phase problem including transmission conditions and discontinuities at the interface has been investigated in the context of homogenization (using \emph{periodic unfolding}) in~\cite{ETT15}.
A similar situation of a highly heterogeneous two-phase medium with {\em a priori} given phase transformation was considered in~\cite{EKK02}.
Here, the authors use formal asymptotic expansions to derive a homogenized model.
We also want to point out the structural similarity between the thermoelasticity models and models for poroelasticity, cf.~\emph{Biot's linear poroelasticity}~\cite{B41, S02}; for a reference of the derivation of the Biot model via two-scale homogenization, we refer to~\cite[Section~5.2]{M03}.
Examples for homogenization in the context of two-phase poroelasticity, so called double poroelasticity, can be found in~\cite{A13,EB14}.
For some homogenization results via formal asymptotics for problems where the micro-structural changes are not prescribed, we refer to~\cite{BPR16,KNP14, M08b}.

As an alternative approach in the modeling of phase transformation, in particular in the case of phase transformation in steel, phase-field models are often considered, we refer to, e.g., \cite{MSA15,MBW08}.
Some thoughts regarding possible numerical simulations of a similar one-phase problem for a highly heterogeneous media are given in~\cite{Pl93}.
In~\cite{T11}, a numerical framework based on homogenization (via averaging) for a thermoelasticity problem in highly heterogeneous media is developed and investigated.

The paper is organized as follows: 
In Section~\ref{section:setting}, we introduce the $\e$-microscopic geometry and the thermoelasticity problem and, then, transform this to a fixed reference domain. The well-posedness of our microscopic model is investigated in Section~\ref{section:analysis}.
In addition, $\e$-independent estimates necessary for the homogenization process are established.
Finally, in Section~\ref{section:homogenization}, we perform the homogenization procedure relying on the two-scale convergence technique.

\section{Setting and transformation to fixed domain}\label{section:setting}

We start by describing the geometrical setting of the $\e$-parametrized microscopic problem including the transformation characterizing the interface movement.
After that, we go on with formulating the microscopic problem for a highly heterogeneous media -- first for the moving interface and then for the back-transformed, fixed interface.  

We note that our setting (with the transformation) is closely related to the notion of \emph{locally periodic} domains, see~\cite{FAZM11,VM11}.
In addition, we also refer to~\cite{D15,M08}, where similar transformation settings are introduced.

Let $S=(0,T)$, $T>0$, be a time interval. Let $\Omega$ be the interior of a union of a finite number of closed cubes $Q_j$, $1\leq j\leq n$, $n\in\N$, whose vertices are in $\Q^3$ such that, in addition, $\Omega$ is a Lipschitz domain. 
In general, $\Omega$ could be much more general.
By this particular choice, we avoid the inherent technical difficulties that would arise in the homogenization process due to the involvement of general geometries; we are focusing instead on the technical difficulties arising a) due to the strong coupling in the structure of the governing partial differential equations and b) due to the time-dependency of the geometry. 

In addition, we denote the outer normal vector of $\Omega$ with $\nu=\nu(x)$.
Let $Y=(0,1)^3$ be the open unit cell in $\R^3$.
Take $Y_A,$ $Y_B\subset Y$  two disjoint open sets, such that $Y_A$ is connected, such that $\Gamma:=\overline{Y_A}\cap\overline{Y_B}$ is a $C^3$ interface, $\Gamma=\partial Y_B$, $\overline{Y_B}\subset Y$, and $Y=Y_A\cup Y_B\cup \Gamma$, see Figure~\ref{s:fig}.
With $n_0=n_0(y)$, $y\in\Gamma$, we denote the normal vector of $\Gamma$ pointing outwards of $Y_B$.

For $\e>0$, we introduce the $\e Y$-periodic, initial domains $\Omega_A^\e$ and $\Omega_B^\e$ and interface $\Gamma^\e$ representing the two phases and the phase boundary, respectively, via ($i\in\{A,B\}$)
\begin{align*}
	\Omega^\e_i=\Omega\cap\left(\bigcup_{k\in\Z^3}\e(Y_i+k)\right),\qquad
	\Gamma^\e=\Omega\cap\left(\bigcup_{k\in\Z^3}\e(\Gamma+k)\right).
\end{align*}
Here, for a set $M\subset\R^3$, $k\in\Z^3$, and $\e>0$, we employ the notation
$$
	\e(M+k):=\left\{x\in\R^3\ : \ \frac{x}{\e}-k\in M\right\}.
$$
From now on, we take $\e=(\e_n)_{n\in\N}$ to be a sequence of monotonically decreasing positive numbers converging to zero such that $\Omega$ can be represented as the union of cubes of size $\e$.
Note that this is possible due to the assumed structure of $\Omega$.

Here $n_0^\e=n_0(\frac{x}{\e})$, $x\in\Gamma^\e$, denotes the unit normal vector (extended by periodicity) pointing outwards $\Omega_B^\e$ into $\Omega_A^\e$.
The above construction ensures that  $\Omega_A^\e$ is connected and that $\Omega_B^\e$ is disconnected.
We also have that $\partial\Omega_B^\e\cap\partial\Omega=\emptyset$.
In the different case that both $\Omega_A^\e$ and $\Omega_B^\e$ are connected, we additionally would need to rely on special uniform extension operators, see~\cite{HB14}, in order to pass to the homogenization limit.

We assume that $s\colon\overline{S}\times\overline\Omega\times\R^3\to\overline{Y}$ is a function such that
\begin{enumerate}
	\item $s\in C^1(\overline{S};C^2(\overline{\Omega})\times C^2_\#(Y))$,\footnote{%
	Here, and in the following, the $\#$ subscript denotes periodicity, i.e., for $k\in\N_0$, we have $C^k_\#(Y)=\{f\in C^k(\R^3): f(x+e_i)=f(x)\ \text{for all} \ x\in\R^3\}$, $e_i$ basis vector of $\R^3$.}
	\item $s(t,x,\cdot)_{|\overline{Y}}\colon\overline{Y}\to\overline{Y}$ is bijective for every $(t,x)\in\overline{S}\times\overline{\Omega}$,
	\item $s^{-1}\in C^1(\overline{S};C^2(\overline{\Omega})\times C^2_\#(Y))$,\footnote{Here, $s^{-1}\colon\overline{S}\times\overline\Omega\times\R^3\to\overline{Y}$ is the unique function such that $s(t,x,s^{-1}(t,x,y))=y$ for all $(t,x,y)\in\overline{S}\times\overline{\Omega}\in\overline{Y}$ extended by periodicity to all $y\in\R^3$.}
	\item $s(0,x,y)=y$ for all $x\in\overline{\Omega}$ and all $y\in\overline Y$,
	\item $s(t,x,y)=y$ for all $(t,x)\in\overline{S}\times\overline{\Omega}$ and all $y\in\partial Y$,
	\item there is a constant $c>0$ such that $\mathrm{dist}(\partial Y,\gamma)>c$ for all $\gamma\in s(t,x,\Gamma)$ and $(t,x)\in\overline{S}\times\overline{\Omega}$,
	\item $s(t,x,y)=y$ for all $(t,x)\in\overline{S}\times\overline{\Omega}$ and for all $y\in Y$ such that $\mathrm{dist}(\partial Y,y)<\frac{c}{2}$,
	\item there are constants $c_s,C_s>0$ such that
	      $$
					c_s\leq\det(\nabla s(t,x,y))\leq C_s,\quad (t,x,y)\in\overline{S}\times\overline\Omega\times\R^3
				$$
\end{enumerate}
and set the $(t,x)$-parametrized sets
\begin{align*}
	Y_A(t,x)=s(t,x,Y_A),\quad
	Y_B(t,x)=s(t,x,Y_B),\quad
	\Gamma(t,x)=s(t,x,\Gamma).
\end{align*}
Here, Assumptions~(1)-(3) ensure that the transformation is regular enough for our further considerations (e.g., to guarantee that the curvature of the deformed domains is well-defined). 
For the initial configuration, we have $Y_A=Y_A(0,x)$, $Y_B=Y_B(0,x)$, and $\Gamma=\Gamma(0,x)$ (Assumption~(4)).
In addition, with Assumption~(6), we get a uniform (w.r.t.~$(t,x)\in\overline{S}\times\overline{\Omega}$) minimum distance between the interface $\Gamma(t,x)$ and the boundary of the $Y$-cell and, with Assumption~(5) and Assumption~(7), make sure that points near the boundary of the unit cell $Y$ are not deformed.
Finally, Assumption~(8) is of particular importance when it comes to proving $\e$-independent estimates.

We introduce the operations
\begin{align*}
	[\cdot]&\colon\R^3\to\Z^3,\quad [x]=k \ \text{such that}\  x-[x]\in Y,\\
	\{\cdot\}&\colon\R^3\to Y,\quad \{x\}=x-[x]
\end{align*}
and define the $\e$-dependent function\footnote{This is the typical notation in the context of homogenization via the \emph{periodic unfolding method}, see, e.g., \cite{CDG08, D12}.}
$$
	s^\e\colon\overline{S}\times\overline{\Omega}\to\R^3,\quad s^\e(t,x):=\e\left[\frac{x}{\e}\right]+\e s\left(t,\e\left[\frac{x}{\e}\right],\left\{\frac{x}{\e}\right\}\right).
$$
This function is well-defined as $\left\{\frac{x}{\e}\right\}\in Y$ and $\e\left[\frac{x}{\e}\right]\in\overline{\Omega}$.
Since $s\left(t,x,y\right)=y$ for all $(t,x)\in\overline{S}\times\overline{\Omega}$ and for all $y\in Y$ such that $\mathrm{dist}(\partial Y,y)>\frac{c}{2}$, we see that 
$$
	s^\e\in C^1(\overline{S};C^2(\overline{\Omega})).
$$
For $i\in\{A,B\}$ and $t\in\overline{S}$, we set the time dependent sets $\Omega_i^\e(t)$ and $\Gamma^\e(t)$ and the corresponding non-cylindrical space-time domains $Q_i^\e$ and space-time phase boundary $\Sigma^\e$ via
\begin{alignat*}{2}
	\Omega_i^\e(t)&=s^\e(t,\Omega_i^\e),&\qquad Q_i^\e&=\bigcup_{t\in S}\left(\{t\}\times\Omega_i^\e(t)\right),\\
	\Gamma^\e(t)&=s^\e(t,\Gamma^\e),&\qquad\Sigma^\e&=\bigcup_{t\in S}\left(\{t\}\times\Gamma^\e(t)\right),
\end{alignat*}
and denote by $n^\e=n^\e(t,x)$, $t\in S$, $x\in\Gamma^\e(t)$, the unit normal vector pointing outwards $\Omega_B^\e(t)$ into $\Omega_A^\e(t)$.
The time-dependent domains $\Omega_i^\e(t)$ host the phases at time $t\in\overline{S}$ and model the movement of the interface $\Gamma^\e$.
We emphasize that, for any $t>0$, the sets $\Omega_A^\e(t)$, $\Omega_B^\e(t)$, and $\Gamma^\e(t)$ do not need to be periodic.

\begin{figure}[!ht]
\hspace{1cm}
\begin{tikzpicture}[scale=1.03]
	\pgftext{\includegraphics[width=0.25\textwidth,]{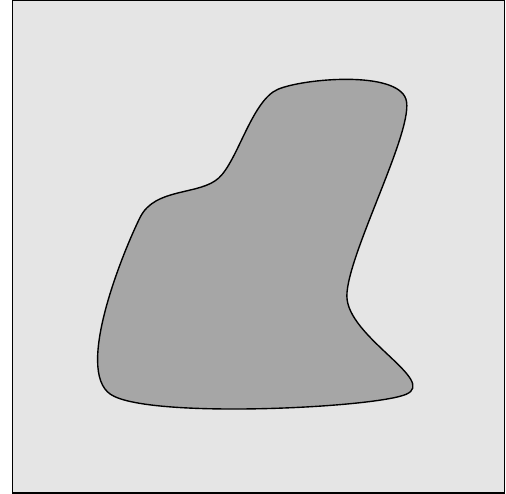}};
	\draw[thin] (0.6,0.7) -- (1.4,1.7);
	\draw (1.4,1.7) node[above] {{\small{$Y_B(0)$}}};
	\draw[thin] (-0.5,1.3) -- (-1,1.7);
	\draw (-1,1.7) node[above] {{\small{$Y_A(0)$}}};
	\draw[thin] (0.75,0.2) -- (1.8,0.5);
	\draw (1.8,0.5) node[right] {{\small{$\Gamma(0)$}}};
\end{tikzpicture}
\hspace{1cm}
\begin{tikzpicture}[scale=1]
	\pgftext{\includegraphics[width=0.25\textwidth]{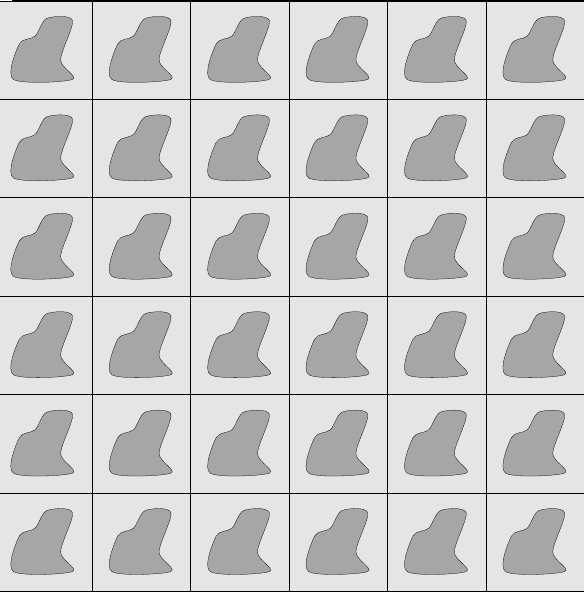}};
	\draw[thin] (-1.6,-1.6) rectangle (1.6,1.6);
	\draw[thin] (0.75,1.3) -- (1.1,1.9);
	\draw[thin] (1.3,1.35) -- (1.1,1.9);
	\draw (1.1,1.9) node[above] {{\small{$\Omega_B^\e(0)$}}};
	\draw[thin] (-0.45,1.5) -- (-1,1.9);
	\draw[thin] (-1.1,1.5) -- (-1,1.9);
	\draw (-1,1.9) node[above] {{\small{$\Omega_A^\e(0)$}}};
	\draw[thin] (0.9,0.2) -- (1.8,0.7);
	\draw (1.8,0.7) node[right] {{\small{$\Gamma^\e(0)$}}};
	\draw[thick] (1.52,0) -- (1.68,0);
	\draw[thick] (1.52,-0.55) -- (1.68,-0.55);
	\draw[thick] (1.6,0) -- (1.6,-0.55);
	\draw (1.65,-0.25) node[right] {{\small{$\e$}}};
\end{tikzpicture}
\caption{Reference geometry and the resulting $\e$-periodic initial configuration. Note that for $t\neq0$, these domains typically loose their periodicity.}
\label{s:fig}
\end{figure}

For all $(t,x)\in \overline{S}\times\overline{\Omega}$, we introduce the functions
\begin{subequations}\label{s:trafo_quantities}
\begin{alignat}{2}
	F^\e&\colon\overline{S}\times\overline{\Omega}\to\R^{3\times3},\qquad &
		F^\e(t,x)&:=\nabla s^\e(t,x),\label{s:trafo_quantities:1}\\
	J^\e&\colon\overline{S}\times\overline{\Omega}\to\R,\qquad&
		J^\e(t,x)&:=\det\left(\nabla s^\e(t,x)\right),\\
	v^\e&\colon\overline{S}\times\overline{\Omega}\to\R^{3},\qquad&
		v^\e(t,x)&:=\partial_ts^\e(t,x),
\end{alignat}
and see that\footnote{Here, $(F^{\e})^{-T}=\left((F^{\e})^{-1}\right)^T$.}
\begin{equation}
	n^\e(t,s^\e(t,x))=\frac{(F^{\e})^{-T}(t,x)n_0^\e(x)}{\left|(F^{\e})^{-T}(t,x)n_0^\e(x)\right|}.
\end{equation}
In addition, we need the \emph{mean curvature}  $H_\Gamma^\e$  and \emph{the normal velocity} (inwards $\Omega_A(t)$)  $W_\Gamma^\e$ of the interface $\Gamma^\e(t)$ (w.r.t.~to the coordinates of the initial configuration!):
\begin{alignat}{2}
	W_\Gamma^\e&\colon\overline{S}\times\Gamma^\e\to\R,\quad&
		W_\Gamma^\e(t,x)&:=v^\e(t,x)\cdot n^\e(t,s^\e(t,x)),\\
	H_\Gamma^\e&\colon\overline{S}\times\Gamma^\e\to\R,\quad&
		H_\Gamma^\e(t,x)&:=-\dive\left((F^{\e})^{-1}(t,x)n^\e(t,s^\e(t,x))\right).\label{s:trafo_quantities:6}
\end{alignat}
\end{subequations}
We note that, via this definition, $H_\Gamma^\e$ is non positive at points $x\in\Gamma^\e$, where the intersection of $\Omega_B^\e$ and a sufficiently small ball with center $x$ is a convex set, and non negative when this holds true for $\Omega_A^\e$.

Under this given transformation describing the phase transformation, i.e., the function $s^\e$ and the resulting time dependent domains $\Omega_i^\e$, we consider a fully coupled thermoelasticity problem where we assume the mechanical response to be quasi-stationary and the constitutive laws to be linear.

For $i\in\{A,B\}$, $t\in S$, and $x\in\Omega_i^\e(t)$, let $u_i=u_i(t,x)$ denote the deformation and $\theta_i=\theta_i(t,x)$ the temperature in the respective phase.

The bulk equations are given as (for more details regarding the modeling, we refer to~\cite{B56,K79})
\begin{subequations}\label{p:full_problem_moving}
\begin{alignat}{2}
	-\dive(\CC_A^\e e(u_A^\e)-\alpha_A^\e\theta_A\mathds{I}_3)&=f_{u_A}^\e&\quad&\text{in}\ \ Q_A^\e,\label{p:full_problem_moving:1}\\
	-\dive(\CC_B^\e e(u_B^\e)-\alpha_B^\e\theta_B\mathds{I}_3)&=f_{u_B}^\e&\quad&\text{in}\ \ Q_B^\e,\label{p:full_problem_moving:2}\\
	\partial_t\left(\rho_Ac_{dA}\theta_A^\e+\gamma_A^\e\dive u_A^\e\right)-\dive(K_A^\e\nabla\theta_A^\e)&=f_{\theta_A}^\e&\quad&\text{in}\ \ Q_A^\e,\label{p:full_problem_moving:3}\\
	\partial_t\left(\rho_Bc_{dB}\theta_B^\e+\gamma_B^\e\dive u_B^\e\right)-\dive(K_B^\e\nabla\theta_B^\e)&=f_{\theta_B}^\e&\quad&\text{in}\ \ Q_B^\e.\label{p:full_problem_moving:4}
\end{alignat}
Here, $\CC_i^\e\in\R^{3\times3\times3\times3}$ are the \emph{stiffness} tensors, $\alpha_i^\e>0$ the \emph{thermal expansion} coefficients, $\rho_i>0$ the \emph{mass densities}, $c_{di}>0$ the \emph{heat capacities}, $\gamma_i^\e>0$ are the \emph{dissipation coefficients}, $K_i^\e\in\R^{3\times3}$ the \emph{thermal conductivities}, and $f_{u_i}^\e$, $f_{\theta_i}^\e$ are volume densities.
In addition, $e(v)=1/2(\nabla v+\nabla v^T)$ denotes the linearized strain tensor and $\mathds{I}_3$ the identity matrix.

At the interface between the phases, the transmission of both the temperature and deformation is assumed to be continuous,\footnote{These conditions are sometimes called \emph{coherent} and \emph{homothermal}, see~\cite{BM05}.} i.e.,
\begin{equation}
	\jump{u^\e}=0,\quad \jump{\theta^\e}=0\quad\text{on}\ \ \Sigma^\e,\label{p:full_problem_moving:5}
\end{equation}
where $\jump{v}:=v_A-v_B$ denotes the jump across the boundary separating phase $A$ from phase $B$.

The jump in the flux of force densities across the interface is assumed to be proportional to the mean curvature of the interface leading to
\begin{equation}\label{p:full_problem_moving:6}
	\jump{\CC^\e\e(u^\e)-\alpha^\e\theta^\e\mathds{I}_3}n^\e=-\e^2\widetilde{H_\Gamma^\e}\sigma_0n^\e\quad\text{on}\ \ \Sigma^\e,
\end{equation}
where $\sigma_0>0$ is the coefficient of surface tension and where $\widetilde{H_\Gamma^\e}$ is the mean curvature of the interface w.r.t.~moving coordinates.\footnote{I.e., $\widetilde{H_\Gamma^\e}(t,s^\e(t,x))=H_\Gamma^\e(t,x)$.}
Here, the scaling via $\e^2$ counters the effects of both the interface surface area, note that $\e|\Gamma^\e|\in\mathcal{O}(1)$, and the curvature itself, note that $\e|\widetilde{H_\Gamma^\e}|\in\mathcal{O}(1)$.

In a similar way, the jump of the heat across the interface is assumed to be given via the \emph{latent heat} $L_{AB}\in\R$:
\begin{equation}\label{p:full_problem_moving:7}
	\jump{\rho c_d}\theta^\e \widetilde{W_{\Gamma}^\e}+\jump{\gamma^\e\dive u^\e}\widetilde{W_\Gamma^\e}-\jump{K^\e\nabla\theta^\e}\cdot n^\e= L_{AB}\widetilde{W_\Gamma^\e}\quad\text{in}\ \ \Sigma^\e,
\end{equation}
where $\widetilde{W_\Gamma^\e}$ denotes the normal velocity of the interface w.r.t.~moving coordinates. Note that, if we neglect the dissipation and if we have equal densities and heat capacities in both phases (or, a bit more general, $\jump{\rho c_d}=0$), equation~\eqref{p:full_problem_moving:7} reduces to the usual \emph{Stefan condition}.
More complex interface conditions then equations~\eqref{p:full_problem_moving:6}, \eqref{p:full_problem_moving:6} would arise, if the interface were allowed to be thermodynamically active thereby requiring us to formulate seperate balance equations for surface stress and surface heat, we refer to~\cite{WB15}.

Finally, we pose homogeneous \emph{Dirichlet conditions} for the momentum equation and homogeneous \emph{Neumann conditions} for the heat equation as well as initial conditions for the temperature:
\begin{alignat}{2}
	u_A^\e&=0&\quad&\text{on}\ \ S\times\partial\Omega_A^\e,\\
	-K_A^\e\nabla\theta_A^\e\cdot\nu&=0&\quad&\text{on}\ \ S\times\partial\Omega_A^\e,\\
	\theta^\e(0)&=\theta_{0}^\e&\quad&\text{on}\ \ \Omega,\label{p:full_problem_moving:9}
\end{alignat}
where $\theta_{0}^\e$ is some (possibly highly heterogeneous) initial temperature distribution.
\end{subequations}

To summarize, we are considering a highly heterogeneous medium that is composed of two different phases/microstructures where one phase is a connected matrix in which small inclusions of the other phase are (in the beginning, periodically) embedded (see Figure~\ref{s:fig}), e.g., \emph{bainitic} inclusions in \emph{austenite} steel. 
Due to phase transformations (in our example, the bainitic inclusions might grow at the cost of the austenite phase) which are assumed to be completely known {\em a priori}, the phase domains change with time.
In this geometrical setting, we then investigate the thermomechanical response of the two-phase medium to the surface stresses exerted by the phase interface due to its curvature (equation~\eqref{p:full_problem_moving:6}) and the latent heat released via the phase transformation (equation~\eqref{p:full_problem_moving:7}).
We note that this situation has (in spirit) some similarity with the one considered in~\cite{EKK02}.

Now, we choose a particular scaling (of some coefficients) with respect to the $\e$-parameter that leads to a distributed microstructure model in the homogenization limit:
For $i\in\{A;B\}$, we assume that there are constants $\CC_i\in\R^{3\times3\times3\times3}$, $K_i\in\R^{3\times3}$, $\alpha_i,\gamma_i>0$
such that
\begin{alignat*}{4}
	\CC_A^\e&=\CC_A,&\quad K_A^\e&=K_A,&\quad\alpha_A^\e&=\alpha_A,&\quad\gamma_A^\e&=\gamma_A,\\
	\CC_B^\e&=\e^2\CC_B,&\quad K_B^\e&=\e^2K_B,&\quad\alpha_B^\e&=\e\alpha_B,&\quad\gamma_B^\e&=\e\gamma_B.
\end{alignat*}
These specific $\e$-scalings are quite common in the modeling of two-phase media, see, e.g.,~\cite{A13, CS99, EB14, FAZM11, Y11},
and are usually justified (albeit only heuristically) by assuming different orders of magnitude of the characteristic time scales of the involved physical processes in the respective domains.
In our case, this means that the effect of heat conduction, the dissipation, the stresses, and the thermal expansion are assumed to be smaller/slower in the inclusions when compared to what happens in the matrix.
By two-scale convergence results, this scaling leads to a distributed microstructure model, cf.~\cite[Proposition 1.14.~(ii)]{Al92}.
Some other $\e$-scalings are, of course, possible and, depending on the underlying assumptions regarding the orders of magnitude of the involved processes, might be sensible.
Without the scalings in the bulk equations (i.e., for $\CC_B^\e$, $K_B^\e$, $\alpha_B^\e$, and $\gamma_B^\e$)), e.g., we would expect to get a purely macroscopical limit problem, where only some of the information of the microstructure (and their changes) are coded into the averaged coefficients, similar to the results in~\cite{A11}.
Related problems withno scaling of $\CC_B^\e$ but otherwise the same scaling for similar scenarios (in the context of double poroelasticity) were investigated in~\cite{A13,EB14}.
For a more holistic approach to different sets of scalings and their effect on the homogenization procedure, we refer to~\cite{PB08}.

We assume that the tensors $\CC_i$ and matrices $K_i$ are symmetric and have constant entries and also that there is a constant $c>0$ such that
$\CC_iM:M\geq c\left|M\right|^2$ for all symmetric matrices $M\in\R^{3\times3}$ and such that $K_iv\cdot v\geq c\left|v\right|^2$ for all $v\in\R^3$.
Note that it would also be possible to treat non-constant coefficients as long as estimates~\eqref{lemma:eq:estimates_movement:1}-\eqref{lemma:eq:estimates_movement:5} hold uniformly in time and space and as long as the functions are sufficiently regular for the analysis part to hold.\footnote{E.g., we would need $\rho_ic_{d_i}$, $\CC_i$, and $\alpha_i$ to be differentiable w.r.t.~time in order for Lemma~\ref{lemma:time_regularity} to hold.}

Now, from the construction and the regularity of $s$, we have the following estimates at hand for the transformation-related quantities defined via equations~\eqref{s:trafo_quantities:1}-\eqref{s:trafo_quantities:6}:
\begin{multline}\label{s:estimate_movement}
	\left\|F^\e\right\|_{L^\infty(S\times\Omega)^{3\times3}}+\left\|(F^\e)^{-1}\right\|_{L^\infty(S\times\Omega)^{3\times3}}+\left\|J^\e\right\|_{L^\infty(S\times\Omega)}\\
	+\e^{-1}\left\|v^\e\right\|_{L^\infty(S\times\Omega)^3}
	+\e^{-1}\left\|W_\Gamma^\e\right\|_{L^\infty(S\times\Gamma^\e)}+\e\left\|H_\Gamma^\e\right\|_{L^\infty(S\times\Gamma^\e)}\leq C,
\end{multline}
where $C$ is independent of $\e>0$.
Furthermore, we also see that there is an $\e$-independent $c>0$ such that $J^\e(t,x)\geq c$ for all $(t,x)\in\overline{S}\times\overline{\Omega}$.

We introduce the transformed coefficient functions needed to transform equations~\eqref{p:full_problem_moving:1}-\eqref{p:full_problem_moving:9} in a fixed domain, i.e., without movement of the phase interface:
\begin{subequations}\label{s:ref_coefficients}
\begin{alignat}{2}
	\A^\e&\colon S\times\Omega\to\R^{3\times3\times3\times3},\quad&\A^\e B&=\frac{1}{2}\left((F^{\e})^{-T} B+\left((F^{\e})^{-T} B\right)^T\right),\label{s:ref_coefficients:1}\\
	\CC_i^{ref,\e}&\colon S\times\Omega_i^\e\to\R^{3\times3\times3\times3},\quad& \CC_i^{ref,\e} &=J^\e (\A^\e)^T\CC_i\A^\e,\\
	\alpha_i^{ref,\e}&\colon S\times\Omega_i^\e\to\R^{3\times3}, \quad&\alpha_i^{ref,\e} &=J^\e \alpha_i(F^{\e})^{-T} ,\\
	H_\Gamma^{ref,\e}&\colon S\times\Gamma^\e\to\R^{3\times3}, \quad& H_\Gamma ^{ref,\e} &=J^\e \sigma_0H_\Gamma^\e (F^{\e})^{-1} ,\\
	c_{i}^{ref,\e}&\colon S\times\Omega_i^\e\to\R, \quad& c_{di}^{ref,\e} &=J^\e\rho_ic_{di},\\
	\gamma_i^{ref,\e}&\colon S\times\Omega_i^\e\to\R,\quad&\gamma_i^{ref,\e}&=J^\e\gamma_i(F^{\e})^{-T},\\
	v^{ref,\e}&\colon S\times\Omega\to\R^3, \quad& v^{ref,\e} &=(F^{\e})^{-1} v^\e ,\\
	K_i^{ref,\e}&\colon S\times\Omega_i^\e\to\R^{3\times3}, \quad& K_i^{ref,\e} &=J^\e (F^{\e})^{-1} K_i(F^{\e})^{-T} ,\\
	W_\Gamma^{ref,\e}&\colon S\times\Gamma^\e\to\R, \quad& W_\Gamma^{ref,\e} &=J^\e W_\Gamma^\e,\label{s:ref_coefficients:9}\\
	f_{u_i}^{ref,\e}&\colon S\times\Omega_i^\e\to\R^3, \quad& f_{u_i}^{ref,\e} &=J^\e\widehat{f_{u_i}}^\e,\label{s:ref_coefficients:10}\\
	f_{\theta_i}^{ref,\e}&\colon S\times\Omega_i^\e\to\R,\quad& f_{\theta_i}^{ref,\e} &=J^\e\widehat{f_{\theta_i}}^\e\label{s:ref_coefficients:11}.
\end{alignat}
\end{subequations}
For a given function $v=v(t,x)$, we denote the corresponding transformed function by ${\widehat{v}}^\e(t,x)=v(t,s^\e(t,x))$. 
Then, as a consequence of the estimate~\eqref{s:estimate_movement}, there is a $C>0$ independent of $t\in S$, $x\in\Omega$, and $\e>0$ such that
\begin{subequations}\label{lemma:eq:estimates_movement}
\begin{multline}\label{lemma:eq:estimates_movement:1}
	\left|\CC_i^{ref,\e}\right|+
	\left|\alpha_i^{ref,\e}\right|+
	\e\left|H_\Gamma^{ref,\e}\right|+
	\left|c_{i}^{ref,\e}\right|\\
	+\e^{-1}\left|v^{ref,\e}\right|+
	\left|K_i^{ref,\e}\right|+
	\e^{-1}\left|W_\Gamma^{ref,\e}\right|
	\leq C.
\end{multline}
Furthermore, using the uniform positivity of $J^\e$, we get the following uniform (all independent of $t\in S$, $x\in\Omega$, and $\e>0$) positivity estimates
\begin{alignat}{2}
	\CC_i^{ref,\e}M:M&\geq c\left|M\right|^2&\quad&\text{for all}\ \ M\in\Sym(3),\label{lemma:eq:estimates_movement:2}\\
	\alpha_i^{ref,\e}v\cdot v&\geq c\left|v\right|^2&\quad&\text{for all}\ \ v\in\R^3,\label{lemma:eq:estimates_movement:3}\\
	c_{i}^{ref,\e}&\geq c,&\label{lemma:eq:estimates_movement:4}\\
	K_i^{ref,\e}v\cdot v&\geq c\left|v\right|^2&\quad&\text{for all}\ \ v\in\R^3.\label{lemma:eq:estimates_movement:5}
\end{alignat}
\end{subequations}

Taking the back-transformed quantities (defined on the initial periodic domains $\Omega_i^\e$) $U_i^\e\colon S\times\Omega_i^\e\to\R^3$ and $\Theta_i^\e\colon S\times\Omega_i^\e\to\R^3$ given via $U_i^\e(t,x)=u_i^\e(t,s^{-1,\e}(t,x))$ and $\Theta_i^\e(t,x)=\theta_i^\e(t,s^{-1,\e}(t,x))$,\footnote{%
Here, $s^{-1,\e}\colon \overline{S}\times\overline{\Omega}\to\overline{\Omega}$ is the inverse function of $s^\e$.} we get the following problem in fixed coordinates (for more details regarding the transformation to a fixed domain, we refer to~\cite{D12,M08, PSZ13}):
\begin{subequations}\label{p:ref}
\begin{alignat}{2} 
	-\dive\left(\CC_A^{ref,\e}(U_A^\e)-\Theta_A^\e\alpha_A^{ref,\e}\right)&=f_{u_A}^{ref,\e} \quad &&\text{in}\ \ S\times\Omega_A^\e,\label{p:ref:1}\\
	-\dive\left(\e^2\CC_B^{ref,\e} e(U_B^\e)-\e\Theta_B^\e\alpha_B^{ref,\e}\right)&=f_{u_B}^{ref,\e} \quad &&\text{in}\ \ S\times\Omega_B^\e,
\end{alignat}
\begin{alignat}{2}
	\begin{split}
	\partial_t\left(c_A^{ref,\e}\Theta_A^\e+\gamma_A^{ref,\e}:\nabla U_A^\e\right)-\dive\left(K_A^{ref,\e}\nabla\Theta_A^\e\right)\\
	-\dive\left(\left(c_A^{ref,\e}\Theta_A^\e+\gamma_A^{ref,\e}:\nabla U_A^\e\right)v^{ref,\e}\right)&=f_{\theta_A}^{ref,\e}\quad \text{in}\ \ S\times\Omega_A^\e,
	\end{split}\\
	\begin{split}
	\partial_t\left(c_B^{ref,\e}\Theta_B^\e+\e\gamma_B^{ref,\e}:\nabla U_B^\e\right)-\dive\left(\e^2K_B^{ref,\e}\nabla\Theta_B^\e\right)\\
	-\dive\left(\left(c_B^{ref,\e}\Theta_B^\e+\e\gamma_B^{ref,\e}:\nabla U_B^\e\right)v^{ref,\e}\right)
	&=f_{\theta_B}^{ref,\e}\quad \text{in}\ \ S\times\Omega_B^\e,
	\end{split}\label{p:ref:4}
\end{alignat}
\end{subequations}
complemented with interface transmission, boundary, and initial conditions.

Note that the structure of this system is similar to the moving interface problem given via equations~\eqref{p:full_problem_moving:1}-\eqref{p:full_problem_moving:9}, except for the advection terms, some additional non-isotropic effects, and the time/space dependency of all coefficients.

\section{Analysis of the micro problem}\label{section:analysis}
We introduce the functional spaces
\begin{align*}
	V_u:=W_0^{1,2}(\Omega)^3,\quad V_\theta:=W^{1,2}(\Omega),\quad H&:=L^2(\Omega)
\end{align*}
and get, after identifying $H$ with their dual via \emph{Riesz's representation map}, the \emph{Gelfand triple}, $V_\theta\hookrightarrow H\hookrightarrow {V_\theta}'$.
With $(,)_H$ and $\langle,\rangle_{V'V}$, we denote the inner product of a Hilbert space $H$ and the dual product of a Banach space $V$, respectively.

Using the well-known \emph{Korn inequality} (see, e.g.,~\cite{DL76}), we see that we can use 
$$
	\|u\|_{V_u}:=\|e(v)\|_{L^2(\Omega_A^\e)^{3\times3}}+\e\|e(v)\|_{L^2(\Omega_B^\e)^{3\times3}},
$$
where $e(u)=1/2(\nabla u+\nabla u^T)$, instead of the standard Sobolev norm for $V_u$.
In the following Lemma, we establish some control on the parameter $\e$

\begin{lemma}\label{lemma:korn}
There is a $C>0$ independent of $\e>0$ such that
\begin{multline}
	\|v\|_{L^2(\Omega)^3}+\|\nabla v\|_{L^2(\Omega_A^\e)^3}+\e\|\nabla v\|_{L^2(\Omega_B^\e)^{3\times3}}\\
	\leq C\left(\|e(v)\|_{L^2(\Omega_A^\e)^{3\times3}}+\e\|e(v)\|_{L^2(\Omega_B^\e)^{3\times3}}\right), \quad v\in V_u.
\end{multline}
\end{lemma}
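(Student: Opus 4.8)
The plan is to treat the two phases separately and reduce everything to Korn's inequalities on \emph{fixed} domains — the whole of $\Omega$ for the matrix phase, and the reference cube $Y$ for each inclusion — transporting the cell estimates to the $\e$-scale by scaling; the scale-invariance of the Korn constants involved is what makes the final bound independent of $\e$.

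\emph{Step 1 (matrix phase).} By the assumed structure of $\Omega$ every cell $\e(Y+k)$ meeting $\Omega$ is contained in $\overline\Omega$, and since $\overline{Y_B}\subset Y$ and $\partial\Omega_B^\e\cap\partial\Omega=\emptyset$, in each such cell the inclusion $\e(Y_B+k)$ is surrounded by matrix material. Fix a bounded linear extension $P\colon W^{1,2}(Y_A)^3\to W^{1,2}(Y)^3$ with $Pw=w$ on $Y_A$ and $Pw=w$ on a neighbourhood of $\partial Y$; combining it with the rigid motion $r_w$ furnished by Korn's second inequality on $Y_A$ (see~\cite{DL76}) via $\widetilde Pw:=P(w-r_w)+r_w$ produces an extension satisfying $\|e(\widetilde Pw)\|_{L^2(Y)}\le C\|e(w)\|_{L^2(Y_A)}$ and $\|\nabla\widetilde Pw\|_{L^2(Y)}\le C\|\nabla w\|_{L^2(Y_A)}$, while still agreeing with $w$ near $\partial Y$. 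Gluing the rescaled operators cell-by-cell yields $\mathcal{E}_\e\colon\{w\in W^{1,2}(\Omega_A^\e)^3:w=0\text{ on }\partial\Omega\}\to W^{1,2}_0(\Omega)^3$ with $\mathcal{E}_\e w=w$ on $\Omega_A^\e$ and, with $C$ independent of $\e$,
\[
	\|e(\mathcal{E}_\e w)\|_{L^2(\Omega)}\le C\|e(w)\|_{L^2(\Omega_A^\e)},\qquad \|\nabla\mathcal{E}_\e w\|_{L^2(\Omega)}\le C\|\nabla w\|_{L^2(\Omega_A^\e)}.
\]
Applying the classical Korn and Poincar\'e inequalities on $\Omega$ to $\mathcal{E}_\e v$ (which vanishes on $\partial\Omega$) gives
\[
	\|v\|_{L^2(\Omega_A^\e)^3}+\|\nabla v\|_{L^2(\Omega_A^\e)^{3\times3}}\le C\left(\|\mathcal{E}_\e v\|_{L^2(\Omega)^3}+\|\nabla\mathcal{E}_\e v\|_{L^2(\Omega)^{3\times3}}\right)\le C\|e(v)\|_{L^2(\Omega_A^\e)^{3\times3}}.
\]

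\emph{Step 2 (inclusions).} On each cube $\e(Y+k)\subseteq\overline\Omega$ we use Korn's second inequality in the form that there is a skew-symmetric matrix $B_k$ with $\|\nabla v-B_k\|_{L^2(\e(Y+k))}\le C\|e(v)\|_{L^2(\e(Y+k))}$, the constant being scale-invariant and hence the same for every $k$. Comparing $B_k$ with $\nabla v$ on the matrix part $\e(Y_A+k)$ gives $|B_k|\,\e^{3/2}\le C\left(\|\nabla v\|_{L^2(\e(Y_A+k))}+\|e(v)\|_{L^2(\e(Y+k))}\right)$, hence on the inclusion
\[
	\|\nabla v\|_{L^2(\e(Y_B+k))}\le\|\nabla v-B_k\|_{L^2(\e(Y+k))}+|B_k|\,\e^{3/2}|Y_B|^{1/2}\le C\left(\|\nabla v\|_{L^2(\e(Y_A+k))}+\|e(v)\|_{L^2(\e(Y+k))}\right).
\]
Multiplying by $\e$, squaring, summing over $k$, and using $\e\le1$ together with $\|e(v)\|_{L^2(\Omega)}^2=\|e(v)\|_{L^2(\Omega_A^\e)}^2+\|e(v)\|_{L^2(\Omega_B^\e)}^2$ and Step~1, we obtain $\e^2\|\nabla v\|_{L^2(\Omega_B^\e)}^2\le C\left(\|e(v)\|_{L^2(\Omega_A^\e)}^2+\e^2\|e(v)\|_{L^2(\Omega_B^\e)}^2\right)$. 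Finally, a scaled Poincar\'e inequality $\|v\|_{L^2(\e(Y_B+k))}\le C\left(\|v\|_{L^2(\e(Y_A+k))}+\e\|\nabla v\|_{L^2(\e(Y+k))}\right)$, summed over $k$ and combined with the estimates already obtained, bounds $\|v\|_{L^2(\Omega_B^\e)}$ and hence $\|v\|_{L^2(\Omega)}$; adding the three groups of estimates proves the claim.

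\emph{Main obstacle.} The only genuinely delicate point is the construction of $\mathcal{E}_\e$ in Step~1: the extension on the perforated matrix must be bounded \emph{uniformly in $\e$ in the strain seminorm}, not merely in the full $W^{1,2}$-norm, which is exactly what the rigid-motion subtraction — i.e.\ Korn's second inequality on the reference cell — is for; one must also check that gluing the cell-wise extensions produces a $W^{1,2}(\Omega)$ function vanishing on $\partial\Omega$. The remaining steps are bookkeeping with scale-invariant constants.
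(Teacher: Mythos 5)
Your proof is correct, and Step~1 runs along the same lines as the paper (extend $v|_{\Omega_A^\e}$ to $\Omega$ with $\e$-uniform control of the strain, then apply Korn and Poincar\'e on the fixed domain $\Omega$; the paper simply cites~\cite{OSY92} for the extension rather than spelling out the cell-wise rigid-motion construction). Where you genuinely diverge is in handling the inclusions. The paper subtracts the extension from $v$ on $\Omega_B^\e$, so that $w^\e:=(v-\widetilde{v_A^\e})|_{\Omega_B^\e}$ lies in $W^{1,2}_0(\Omega_B^\e)$, and then applies the scaled Korn inequality for \emph{zero-trace} functions on $Y_B$; this single estimate $\|w^\e\|_{L^2(\Omega_B^\e)}+\e\|\nabla w^\e\|_{L^2(\Omega_B^\e)}\leq \e C_B\|e(w^\e)\|_{L^2(\Omega_B^\e)}$ already delivers both the gradient and the $L^2$ bound in one stroke, after a triangle inequality against $\widetilde{v_A^\e}$. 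You instead invoke Korn's second inequality (rigid-motion approximation) on each full cell $\e(Y+k)$, compare the skew-symmetric matrix $B_k$ with $\nabla v$ on the matrix part, and then patch in a separate scaled Poincar\'e inequality for the $L^2(\Omega_B^\e)$ control. Both routes are sound and rely on the same scale-invariant cell constants; the paper's variant is a bit leaner because the zero-trace Korn inequality collapses the gradient and $L^2$ estimates into one, whereas yours keeps them as two independent (and slightly more bookkeeping-heavy) steps. Your version has the minor advantage of not requiring the extension to be usable on $\Omega_B^\e$ itself, but since the extension is already in hand from Step~1, the paper's shortcut is available and arguably preferable.
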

\begin{proof}
Let $v\in V_u$ and set $v_i^\e:=v_{|_{\Omega_i^\e}}$, $i\in\{A,B\}$.
Then, via extending $v_A^\e$ appropriately to the whole of $\Omega$ (we refer to, e.g.,~\cite[Chapter 1.4]{OSY92}), we call that respective extension $\widetilde{v_A^\e}$, and then using Korn's inequality, we get the $\e$-independent estimate
$$
	\|\widetilde{v_A^\e}\|_{W^{1,2}(\Omega)}\leq C_A\|e(v_A^\e)\|_{L^2(\Omega_A^\e)^{3\times3}}.
$$
Now, since $v\in V_u$, we have $w^\e:=(v-\widetilde{v_A^\e})_{|_{\Omega_B^\e}}\in W^{1,2}_0(\Omega_B^\e)$.
Via a scaling argument (and using Korn's inequality for functions in $ W^{1,2}_0(Y_B)$), we see that
$$
	\|w^\e\|_{L^2(\Omega_B^\e)^3}+\e\|\nabla w^\e\|_{L^2(\Omega_B^\e)^{3\times3}}\leq\e C_B\|e(w^\e)\|_{L^2(\Omega_B^\e)^{3\times3}},
$$
which leads to
\begin{align*}
	\|v_B^\e\|_{L^2(\Omega_B^\e)^3}+\e\|\nabla v_B^\e\|_{L^2(\Omega_B^\e)^{3\times3}}
	&\leq (1+C_B)\left(\|\widetilde{v_A^\e}\|_{W^{1,2}(\Omega)^3}+\e C_B\|e(v_B^\e)\|_{L^2(\Omega_B^\e)^{3\times3}}\right).
\end{align*}
Finally, setting $C=\max\{C_A(2+C_B),C_B\}$, we get the desired estimate.
\end{proof}

Our concept of weak formulation corresponding to the problem in fixed domain, equations~\eqref{p:ref:1}-\eqref{p:ref:4}, is given as: Find $(U^\e,\Theta^\e)\in L^2(S;V_u\times V_\theta)$ such that $\partial_t(U^\e,\Theta^\e)\in L^2(S;V_u'\times V_\theta')$ and $\Theta^\e(0)=\theta^\e_{ext}$ satisfying
\begin{subequations}\label{p:ref_weak}
\begin{multline}\label{p:ref_weak:1}
	\int_{\Omega_A^\e}\CC_A^{ref,\e}e(U_A^\e):e(v)\dx
	+\e^2\int_{\Omega_B^\e}\CC_B^{ref,\e}e(U_B^\e):e(v)\dx\\
	-\int_{\Omega_A^\e}\Theta_A^\e\alpha_A^{ref,\e}:\nabla v \dx
	-\e\int_{\Omega_B^\e}\Theta_B^\e\alpha_B^{ref,\e}:\nabla v \dx\\
	=\int_{\Omega_A^\e}f_{u_A}^{ref,\e}\cdot v\dx
	+\int_{\Omega_B^\e}f_{u_B}^{ref,\e}\cdot v\dx
	+\e^2\int_{\Gamma^\e}H_\Gamma^{ref,\e}n_0^\e\cdot v\ds,
\end{multline}
\begin{multline}\label{p:ref_weak:2}
	\int_S\int_{\Omega_A^\e}\partial_t\left(c_{A}^{ref,\e}\Theta_A^\e\right) v_\theta\dx\dt
	+\int_S\int_{\Omega_A^\e}v^{ref,\e}\Theta_A^\e\cdot\nabla  v_\theta\dx\dt\\
	+\int_S\int_{\Omega_B^\e}\partial_t\left(c_{B}^{ref,\e}\Theta_B^\e\right) v_\theta\dx\dt
	+\int_S\int_{\Omega_B^\e}v^{ref,\e}\Theta_B^\e\cdot\nabla v_\theta\dx\dt\\
	+\int_S\int_{\Omega_A^\e}\partial_{t}\left(\gamma_A^{ref,\e}:\nabla U_A^\e\right)v_\theta\dx\dt
	+\int_S\int_{\Omega_A^\e}v^{ref,\e}\left(\gamma_A^{ref,\e}:\nabla U_A^\e\right)\cdot\nabla v_\theta\dx\dt\\
	+\e\int_S\int_{\Omega_B^\e}\partial_{t}\left(\gamma_B^{ref,\e}:\nabla U_B^\e\right)v_\theta\dx\dt
	+\e\int_S\int_{\Omega_B^\e}v^{ref,\e}\left(\gamma_B^{ref,\e}:\nabla U_B^\e\right)\cdot\nabla v_\theta\dx\dt\\
	+\int_S\int_{\Omega_A^\e}K_A^{ref,\e}\nabla\Theta_A^\e\cdot\nabla v_\theta\dx\dt	
	+\e^2\int_S\int_{\Omega_B^\e}K_B^{ref,\e}\nabla\Theta_B^\e\cdot\nabla v_\theta\dx\dt\\
	+\int_S\int_{\Gamma^\e}W_\Gamma^{ref,\e} v_\theta\ds\dt
	=\int_S\int_{\Omega_A^\e}f_{\theta_A}^{ref,\e}v_\theta\dx\dt
	+\int_S\int_{\Omega_B^\e}f_{\theta_B}^{ref,\e}v_\theta\dx\dt
\end{multline}
\end{subequations}
for all $(v_u,v_\theta)\in L^2(S;V_u\times V_\theta)$.

We start off with the mechanical part, i.e., equation~\eqref{p:ref_weak:1}, and define, for $t\in S$, the linear operators
\begin{alignat*}{2}
	E^\e(t)&\colon V_u\to {V_u}',&\qquad F^\e(t)&\in L^2(\Omega),\\
	e_{th}^\e(t)&\colon H\to {V_u}',&\qquad \mathcal{H}^\e(t)&\in{V_u}'
\end{alignat*}
via
\begin{align*}
	\left\langle E^\e(t)u,v\right\rangle_{{V_u}'V_u}&=\int_{\Omega_A^\e}\CC_A^{ref,\e}(t)e(u):e(v)\dx+\e^2\int_{\Omega_B^\e}\CC_B^{ref,\e}(t)e(u):e(v)\dx,\\
	\left\langle e_{th}^\e(t)\p,v\right\rangle_{{V_u}'V_u}&=\int_{\Omega_A^\e}\p\alpha_A^{ref,\e}(t):\nabla v \dx+\e\int_{\Omega_B^\e}\p\alpha_B^{ref,\e}(t):\nabla v \dx,\\
	F_u^\e(t)&=\begin{cases} f_{u_A}^{ref,\e}(t),\quad &x\in\Omega_A^\e\\ f_{u_B}^{ref,\e}(t),\quad &x\in\Omega_B^\e\end{cases},\\
	\left\langle \mathcal{H}^\e(t),v\right\rangle_{{V_u}'V_u}&=\e^2\int_{\Gamma^\e}H_\Gamma^{ref,\e}(t)n_0^\e\cdot v\ds.
\end{align*}
The weak form~\eqref{p:ref_weak:1} is then equivalent to the operator equation
\begin{equation}\label{problem:mechanic_operator}
	E^\e(t)U^\e-e_{th}^\e(t)\Theta^\e=F_u^\e(t)+\mathcal{H}^\e(t)\quad\text{in}\ \ V_u'.
\end{equation}
\begin{lemma}\label{a:mech_op}
The operator $E^\e(t)$, $t\in\overline{S}$, is coercive, continuous (both uniformly in time and in the parameter $\e$), and symmetric.\\
\end{lemma}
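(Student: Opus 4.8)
The plan is to verify each of the three claimed properties of $E^\e(t)$ directly from its definition, drawing on the uniform bounds~\eqref{lemma:eq:estimates_movement:1}, the positivity estimate~\eqref{lemma:eq:estimates_movement:2}, and the $\e$-uniform Korn inequality from Lemma~\ref{lemma:korn}. Symmetry is immediate: since the stiffness tensors $\CC_i$ are symmetric and the transformed tensors $\CC_i^{ref,\e}(t) = J^\e(\A^\e)^T\CC_i\A^\e$ inherit the major symmetry $\CC_i^{ref,\e}M:N = \CC_i^{ref,\e}N:M$ for symmetric $M,N$, the bilinear form $\langle E^\e(t)u,v\rangle_{V_u'V_u}$ is visibly symmetric in $u$ and $v$.

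For continuity, I would estimate
$$
	\left|\left\langle E^\e(t)u,v\right\rangle_{V_u'V_u}\right|
	\leq \|\CC_A^{ref,\e}(t)\|_{L^\infty}\|e(u)\|_{L^2(\Omega_A^\e)}\|e(v)\|_{L^2(\Omega_A^\e)}
	+ \e^2\|\CC_B^{ref,\e}(t)\|_{L^\infty}\|e(u)\|_{L^2(\Omega_B^\e)}\|e(v)\|_{L^2(\Omega_B^\e)}
$$
by Cauchy--Schwarz in each phase. Using~\eqref{lemma:eq:estimates_movement:1} to bound $\|\CC_i^{ref,\e}(t)\|_{L^\infty}$ by a constant $C$ independent of $t$ and $\e$, and then grouping the $\e$-factors so that the right-hand side is controlled by $C(\|e(u)\|_{L^2(\Omega_A^\e)} + \e\|e(u)\|_{L^2(\Omega_B^\e)})(\|e(v)\|_{L^2(\Omega_A^\e)} + \e\|e(v)\|_{L^2(\Omega_B^\e)}) = C\|u\|_{V_u}\|v\|_{V_u}$, gives continuity uniformly in $t$ and $\e$ with respect to the scaled norm $\|\cdot\|_{V_u}$ introduced before Lemma~\ref{lemma:korn}.

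For coercivity I would apply~\eqref{lemma:eq:estimates_movement:2} in each phase to get
$$
	\left\langle E^\e(t)u,u\right\rangle_{V_u'V_u}
	\geq c\|e(u)\|_{L^2(\Omega_A^\e)}^2 + c\,\e^2\|e(u)\|_{L^2(\Omega_B^\e)}^2
	\geq \tfrac{c}{2}\left(\|e(u)\|_{L^2(\Omega_A^\e)} + \e\|e(u)\|_{L^2(\Omega_B^\e)}\right)^2
	= \tfrac{c}{2}\|u\|_{V_u}^2,
$$
where the middle inequality uses $(a+b)^2 \leq 2(a^2+b^2)$. The constant $c$ is $\e$- and $t$-independent by~\eqref{lemma:eq:estimates_movement:2}. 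The only subtlety — and the point where Lemma~\ref{lemma:korn} is essential — is that this lower bound is in terms of the scaled strain norm $\|u\|_{V_u}$ rather than the full $W^{1,2}$-norm; Lemma~\ref{lemma:korn} is exactly what guarantees that $\|\cdot\|_{V_u}$ is an equivalent norm on $V_u = W_0^{1,2}(\Omega)^3$, with constants uniform in $\e$, so coercivity in the stated sense follows. I expect no serious obstacle here: the main thing to be careful about is keeping track of the $\e$-powers so that all constants genuinely come out independent of $\e$, which is precisely what the scaled norm and Lemma~\ref{lemma:korn} are designed to handle.
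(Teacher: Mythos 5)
Your proposal is correct and takes essentially the same route as the paper's (much terser) proof: coercivity and continuity follow directly from the pointwise bounds~\eqref{lemma:eq:estimates_movement:1} and~\eqref{lemma:eq:estimates_movement:2} via Cauchy--Schwarz in each phase, and symmetry comes from the major symmetry of $\CC_i$ transferred through $\CC_i^{ref,\e}=J^\e(\A^\e)^T\CC_i\A^\e$ (the paper phrases this as ``transforming to moving coordinates,'' which is the same observation). Your added remark that Lemma~\ref{lemma:korn} is what makes $\|\cdot\|_{V_u}$ a legitimate $\e$-uniform norm on $V_u$ is a fair clarification of why the coercivity bound is meaningful, though it is not a step the paper's proof invokes.
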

\begin{proof}
Let $u,v\in H^1(\Omega)$.
Due to the estimates~\eqref{lemma:eq:estimates_movement:2},~\eqref{lemma:eq:estimates_movement:3}, we instantly have
\begin{align*}
	\left\langle E^\e(t)v,v\right\rangle_{{V_u}'V_u}
		&\geq c_\CC \left\|v\right\|_{V_u}^2,\\
	\Big|\left\langle E^\e(t)u,v\right\rangle_{{V_u}'V_u}\Big|
		&\leq C_\CC\left\|u\right\|_{V_u}\left\|v\right\|_{V_u}	
\end{align*}
for all $t\in\overline{S}$ and all $\e>0$.
The symmetry follows, after transforming to moving coordinates, from the symmetry of the $\CC_i$.
\end{proof}
Since $E^\e(t)$ is coercive and continuous, we therefore established, via \emph{Lax-Milgram's Lemma}, that, for all $f_{u}^{ref,\e}\in L^2(\Omega)$, $H_\Gamma^{ref,\e}\in L^2(\Gamma^\e)$, $\Theta^\e\in L^2(\Omega)$, and $t\in \overline{S}$, there is a unique weak solution $U^\e(t)\in V_u$ to Problem~\ref{problem:mechanic_operator}.
In particular, the inverse operator $E^{-1,\e}(t)$ is well defined and also linear, bounded, and coercive.
 
Now, we want to turn our attention to the heat related part of our system, i.e., equation~\eqref{p:ref_weak:2}, where we deal with the coupling\footnote{And therefore the mixed derivative term for the deformations $u_i^\e$.} due to the dissipation term.
This is done by combining the structure of the full problem (the \emph{coupling operators} are basically dual to one another) and the just investigated properties of the operators of the mechanical part.  
Note that the following considerations regarding the \emph{thermal stress operator} $e_{th}^\e$ are (in spirit) quite similar to those presented in~\cite{S02}. 
We see that, for $v_\theta\in V_\theta$ and $v_u\in V_u$, we have\footnote{Here, we used that $\jump{\alpha^{ref,\e}(t)}=\jump{\alpha}$ and $\dive(J^\e F^\e)=0$.}
\begin{align*}
	\left\langle e_{th}^\e(t)v_\theta,v\right\rangle_{{V_u}'V_u}
	&=\int_{\Omega_A^\e}v_\theta\alpha_A^{ref,\e}(t):\nabla v_u \dx+\e\int_{\Omega_B^\e} v_\theta\alpha_B^{ref,\e}(t):\nabla v_u \dx\\
	\begin{split}
		&=-\int_{\Omega_A^\e}\alpha_A^{ref,\e}(t)\nabla v_\theta\cdot v_u\dx-\e\int_{\Omega_B^\e}\alpha_B^{ref,\e}(t)\nabla v_\theta\cdot v_u\dx\\
		&\hspace{2.5cm}+\int_{\Gamma^\e}\jump{\alpha}v_\theta n^\e(t)\cdot v_u\ds,
	\end{split}
\end{align*}
and, as a result,
$$
	e_{th}^\e(t)_{| V_\theta}\colon V_\theta\to L^2(\Omega)^3\times L^2(\Gamma^\e)^3\subset{V_u}'.
$$ 
In addition, we take a look at the corresponding dual operator $\left(e_{th}^\e(t)_{| V_\theta}\right)'\colon L^2(\Omega)^3\times L^2(\Gamma^\e)^3\to {V_\theta}'$ given via
$$
	\left\langle\left(e_{th}^\e(t)_{| V_\theta}\right)'[f,g],v_\theta\right\rangle_{{V_\theta}'V_\theta}=\left(e_{th}^\e(t)_{| V_\theta}v_\theta,[f,g]\right)_{L^2(\Omega)\times L^2(\Gamma^\e)}.
$$
For functions $v_u\in V_u$, we have $v_u=[v_u,{v_u}_{|\Gamma^\e}]\in L^2(\Omega)^3\times L^2(\Gamma^\e)^3$ and see that
\begin{align*}
	\left\langle\left(e_{th}^\e(t)_{| V_\theta}\right)'v_u,v_\theta\right\rangle_{{V_\theta}'V_\theta}
		&=\int_{\Omega_A^\e}v_\theta\alpha_A^{ref,\e}(t):\nabla v_u \dx+\int_{\Omega_B^\e}v_\theta\alpha_B^{ref,\e}(t):\nabla v_u \dx.
\end{align*}
As a consequence, we have $\left(e_{th}^\e(t)_{| V_\theta}\right)'_{|V_u}\colon V_u\to H\subset {V_\theta}'$.
For $v_u\in V_u$ and $f\in H$,
\begin{align*}
	\left(\left(e_{th}^\e(t)_{| V_\theta}\right)'_{|V_u}v_u,f\right)_H
		&=\int_{\Omega_A^\e}f\alpha_A^{ref,\e}(t):\nabla v_u \dx+\int_{\Omega_B^\e}f\alpha_B^{ref,\e}(t):\nabla v_u \dx\\
		&=\left\langle e_{th}^\e(t)f,v_u\right\rangle_{{V_u}'V_u},
\end{align*}
which implies $\left(e_{th}^\e(t)_{| V_\theta}\right)'_{|V_u}=\left(e_{th}^\e(t)\right)'$.

Following from the definition of the operator $e_{th}^\e(t)$, we immediately have the following uniform estimate:
\begin{lemma}\label{lemma:coupling_operators}
For $v_u\in V_u$ and $f\in H$, it holds (uniform in $t\in\overline{S}$ and $\e>0$)
\begin{align}
	\left|\left\langle e_{th}^\e(t)f,v_u\right\rangle_{{V_u}'V_u}\right|&\leq C\|f\|_H\left(\|\nabla v_u\|_{L^2(\Omega_A^\e)^{3\times3}}+\e\|\nabla v_u\|_{L^2(\Omega_B^\e)^{3\times3}}\right).
\end{align}
\end{lemma}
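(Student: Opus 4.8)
The plan is to estimate the pairing directly from its definition, using the Cauchy--Schwarz inequality together with the $\e$-uniform pointwise bounds on the transformed thermal-expansion coefficients that were collected in~\eqref{lemma:eq:estimates_movement:1}. Concretely, setting $\p=f$ and $v=v_u$ in the definition of $e_{th}^\e(t)$, one starts from
$$
	\left\langle e_{th}^\e(t)f,v_u\right\rangle_{{V_u}'V_u}=\int_{\Omega_A^\e}f\,\alpha_A^{ref,\e}(t):\nabla v_u \dx+\e\int_{\Omega_B^\e}f\,\alpha_B^{ref,\e}(t):\nabla v_u \dx,
$$
applies the triangle inequality to separate the matrix and inclusion contributions, and handles each of the two integrals on its own.

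For the matrix term, I would use $|\alpha_A^{ref,\e}(t,x)|\leq C$ uniformly in $(t,x)\in\overline S\times\overline\Omega$ and in $\e$ by~\eqref{lemma:eq:estimates_movement:1}, pull out this constant, and apply Cauchy--Schwarz on $L^2(\Omega_A^\e)$ to obtain the bound $C\|f\|_{L^2(\Omega_A^\e)}\|\nabla v_u\|_{L^2(\Omega_A^\e)^{3\times3}}$. For the inclusion term, the factor $\e$ already present in front of the integral is precisely the weight appearing on the right-hand side of the claimed inequality, so the same pointwise estimate $|\alpha_B^{ref,\e}|\leq C$ and Cauchy--Schwarz on $L^2(\Omega_B^\e)$ give $C\|f\|_{L^2(\Omega_B^\e)}\,\e\|\nabla v_u\|_{L^2(\Omega_B^\e)^{3\times3}}$. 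Using $\|f\|_{L^2(\Omega_A^\e)}\leq\|f\|_H$ and $\|f\|_{L^2(\Omega_B^\e)}\leq\|f\|_H$ (since $\Omega_A^\e,\Omega_B^\e\subset\Omega$) and summing the two estimates yields the assertion.

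There is essentially no serious obstacle here: the whole content is Cauchy--Schwarz plus boundedness of the coefficients. The only point worth emphasizing is where the $\e$-uniformity comes from --- it is not produced in this lemma but inherited entirely from the preparatory bounds~\eqref{lemma:eq:estimates_movement:1} (which themselves rest on Assumption~(8) and the estimate~\eqref{s:estimate_movement}), while the correct powers of $\e$ are already built into the definition of $e_{th}^\e(t)$ and into the norm $\|\cdot\|_{V_u}$. One could equivalently phrase the argument by first noting that $e_{th}^\e(t)$ maps $H$ into $L^2(\Omega)^3\subset{V_u}'$ with operator norm $\leq C$, but the direct computation above is the shortest route and is what I would write.
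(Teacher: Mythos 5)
Your proof is correct and coincides with what the paper has in mind: the paper gives no separate argument, merely noting that the estimate ``follows from the definition of the operator $e_{th}^\e(t)$,'' which is exactly your combination of the defining formula, the uniform bounds $|\alpha_i^{ref,\e}|\leq C$ from~\eqref{lemma:eq:estimates_movement:1}, and Cauchy--Schwarz on each subdomain. Nothing is missing.
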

Now, we go on introducing the following linear, $t$-parametrized functions\footnote{Note that $\gamma^{ref,\e}=\frac{\gamma}{\alpha}\alpha^{ref,\e}$.}
\begin{alignat*}{2}
	B_1^\e(t)&\colon H\to H,&\quad B_1^\e(t)f&=c^{ref,\e}(t)f,\\
	B_2^\e(t)&\colon H\to H,&\quad B_2^\e(t)f&=\frac{\gamma}{\alpha}\left({e_{th}^\e(t)}\right)'E^{-1,\e}(t) e_{th}^\e(t)f,\\
	A_1^\e(t)&\colon V_\theta\to{V_\theta}',&\quad \left\langle A_1^\e(t)v_\theta,v_\theta\right\rangle_{{V_\theta}'V_\theta}&=\left(\rho c_dv^{ref,\e}(t)v_\theta,\nabla v_\theta\right)_H\\
	A_2^\e(t)&\colon V_\theta\to {V_\theta}',&\quad\langle A_2^\e(t)v_\theta,v_\theta\rangle_{{V_\theta}'V_\theta}&=\left(K^{ref,\e}(t)\nabla v_\theta,\nabla v_\theta\right)_{H},\\
	A_3^\e(t)&\colon V_\theta\to {V_\theta}',&\quad\left\langle A_3^\e(t)v_\theta,v_\theta\right\rangle_{{V_\theta}'V_\theta}&=\left(v^{ref,\e}(t)B_2(t)v_\theta,\nabla v_\theta\right)_H.
\end{alignat*}
and $\mathcal{F}_u^\e(t)\in {V_\theta}'$, $F_\theta^\e(t)\in L^2(S;H)$ via
\begin{align*}
	\left\langle \mathcal{F}_u^\e(t),v_\theta\right\rangle_{{V_\theta}'V_\theta}&=\left(\partial_t\left(\frac{\gamma}{\alpha}\left({e_{th}^\e(t)}\right)'E^{-1,\e}\left(F_u^\e(t)+\mathcal{H}^\e(t)\right)\right),v_\theta\right)_H\\
	&\hspace{1cm}+\left(v^{ref,\e}\frac{\gamma}{\alpha}\left({e_{th}^\e(t)}\right)'E^{-1,\e}\left(F_u^\e(t)+\mathcal{H}^\e(t)\right),\nabla v_\theta\right)_H,\\
	F_\theta^\e(t)&=\begin{cases} f_{\theta_A}^{ref,\e}(t),\quad &x\in\Omega_A^\e\\ f_{\theta_B}^{ref,\e}(t),\quad &x\in\Omega_B^\e\end{cases}.
\end{align*}
We note that $\mathcal{F}_u^\e(t)$ is well defined if, for example, $F_u^\e\in C^1(S;H)^3$.

The variational formulation~\eqref{p:ref_weak:2} can then be rewritten as: Find $\Theta^\e\in L^2(S;V_\theta)$ such that $\partial_t\Theta^\e\in L^2(S;{V_\theta}')$, such that $\Theta^\e(0)=\theta_{0}^\e$, and such that
\begin{equation}\label{a:operator_heat}
	\partial_t\left(\sum_{i=1}^2B_i^\e(t)\Theta^\e\right)+\sum_{i=1}^3A_i^\e(t)\Theta^\e+\mathcal{W}_\Gamma^\e(t)=F_\theta^\e(t)-\mathcal{F}_u^\e(t)\quad\text{in}\ \ V_\theta'.
\end{equation}
\begin{lemma}\label{lemma:time_operator}
The operator $B_2^\e$ is continuous (uniformly in $t\in\overline{S}$ and $\e>0$), self-adjoint, and strictly monotone.
In addition, for every $f,g\in H$, we have $(B_2^\e(\cdot)f,g)_H\in L^\infty(S)$.
\end{lemma}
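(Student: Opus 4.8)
The plan is to exploit the identity $B_2^\e(t) = \frac{\gamma}{\alpha}\bigl(e_{th}^\e(t)\bigr)' E^{-1,\e}(t) e_{th}^\e(t)$ and chase the mapping properties through the three factors, each of which has already been analyzed. For \emph{continuity} uniformly in $t$ and $\e$: given $f\in H$, Lemma~\ref{lemma:coupling_operators} gives $\|e_{th}^\e(t)f\|_{V_u'}\leq C\|f\|_H$ (reading the right-hand side of that estimate as the $V_u'$-norm via the Korn-type norm of Lemma~\ref{lemma:korn}); the operator $E^{-1,\e}(t)$ is bounded $V_u'\to V_u$ uniformly in $t,\e$ because $E^\e(t)$ is coercive and continuous uniformly by Lemma~\ref{a:mech_op}; and finally $\bigl(e_{th}^\e(t)\bigr)'$ maps $V_u\to H$ with the same uniform bound, again by Lemma~\ref{lemma:coupling_operators} together with the duality computation $\bigl(e_{th}^\e(t)_{|V_\theta}\bigr)'_{|V_u} = \bigl(e_{th}^\e(t)\bigr)'$ carried out just above the lemma. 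Composing the three estimates and absorbing $\frac{\gamma}{\alpha}$ into the constant yields $\|B_2^\e(t)f\|_H\leq C\|f\|_H$ uniformly.

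For \emph{self-adjointness}: for $f,g\in H$ compute $(B_2^\e(t)f,g)_H = \frac{\gamma}{\alpha}\bigl\langle e_{th}^\e(t)f,\, E^{-1,\e}(t)e_{th}^\e(t)g\bigr\rangle_{V_u'V_u}$, where I use that $\bigl(e_{th}^\e(t)\bigr)'$ is the dual of $e_{th}^\e(t)$ restricted appropriately, so pairing against $g\in H$ reproduces the $V_u'$--$V_u$ duality. Now write $u_f := E^{-1,\e}(t)e_{th}^\e(t)f$ and $u_g := E^{-1,\e}(t)e_{th}^\e(t)g$, so that $e_{th}^\e(t)f = E^\e(t)u_f$ and likewise for $g$; then $(B_2^\e(t)f,g)_H = \frac{\gamma}{\alpha}\langle E^\e(t)u_f, u_g\rangle_{V_u'V_u}$, which is symmetric in $f\leftrightarrow g$ precisely because $E^\e(t)$ is symmetric (Lemma~\ref{a:mech_op}). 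Hence $(B_2^\e(t)f,g)_H=(f,B_2^\e(t)g)_H$.

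For \emph{strict monotonicity}: taking $g=f$ in the computation above gives $(B_2^\e(t)f,f)_H = \frac{\gamma}{\alpha}\langle E^\e(t)u_f,u_f\rangle_{V_u'V_u} \geq \frac{\gamma}{\alpha}c_\CC\|u_f\|_{V_u}^2 \geq 0$, using coercivity of $E^\e(t)$ and $\gamma,\alpha>0$; and this quantity is strictly positive unless $u_f=0$, i.e.\ unless $e_{th}^\e(t)f=E^\e(t)u_f=0$. So the only missing point is that $e_{th}^\e(t)f=0$ forces $f=0$; this follows by testing $\langle e_{th}^\e(t)f,v\rangle_{V_u'V_u}$ against a suitable $v\in V_u$ — e.g.\ choosing $\nabla v$ to line up with $\alpha_A^{ref,\e}$, which is uniformly positive definite by~\eqref{lemma:eq:estimates_movement:3}, one extracts $\int f\,(\text{nonneg.})\dx$-type control forcing $f=0$ a.e.\ on $\Omega_A^\e$, and one argues similarly on $\Omega_B^\e$; since $\Omega = \Omega_A^\e\cup\Omega_B^\e\cup\Gamma^\e$ up to a null set, $f=0$ in $H$. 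Finally, the claim $(B_2^\e(\cdot)f,g)_H\in L^\infty(S)$ follows from the uniform-in-$t$ continuity bound established in the first paragraph, which shows $|(B_2^\e(t)f,g)_H|\leq C\|f\|_H\|g\|_H$ for every $t\in\overline S$. The main obstacle is the strict monotonicity, specifically making the injectivity of $e_{th}^\e(t)$ on $H$ rigorous; the delicate point is that $e_{th}^\e(t)$ lands in $V_u'$ with a boundary contribution on $\Gamma^\e$, so one must choose the test function $v$ carefully (for instance, compactly supported away from $\Gamma^\e$ in each phase separately, then localized) to isolate the bulk term and deploy the coercivity estimate~\eqref{lemma:eq:estimates_movement:3} cleanly.
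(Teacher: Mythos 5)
Your treatment of continuity, the $L^\infty(S)$ property, and self-adjointness is essentially the same as the paper's: compose the uniform bounds from Lemmas~\ref{a:mech_op} and~\ref{lemma:coupling_operators} with the Lax--Milgram bound on $E^{-1,\e}$, and read symmetry off the identity $(B_2^\e(t)f,g)_H=\frac{\gamma}{\alpha}\langle E^\e(t)u_f,u_g\rangle_{V_u'V_u}$.

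On strict monotonicity you actually go further than the paper, which after the same reduction only concludes \emph{monotonicity} (the proof there explicitly writes ``\ldots we also have that $B_2^\e$ is monotone and self-adjoint,'' despite the lemma claiming strict monotonicity). You correctly identify that the strict version reduces to injectivity of $e_{th}^\e(t)$ on $H$. However, your sketch of that injectivity --- ``choosing $\nabla v$ to line up with $\alpha_A^{ref,\e}$'' --- does not work as stated: $\nabla v$ for $v\in V_u$ is a gradient of a vector field and cannot be prescribed pointwise to match an arbitrary (non-gradient) matrix field, so the positive-definiteness estimate~\eqref{lemma:eq:estimates_movement:3} cannot be invoked this directly. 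A correct route: test first with $v\in C_0^\infty(\Omega_A^\e)^3$, so that $e_{th}^\e(t)f=0$ yields $\dive(f\,\alpha_A^{ref,\e})=0$ in $\mathcal{D}'(\Omega_A^\e)$; since $\alpha_A^{ref,\e}=\alpha_A J^\e(F^\e)^{-T}$ and the Piola identity gives $\dive(J^\e(F^\e)^{-T})=0$, this forces $\alpha_A^{ref,\e}\nabla f=0$, and nondegeneracy of $\alpha_A^{ref,\e}$ then gives $\nabla f=0$, i.e.\ $f$ constant on the connected set $\Omega_A^\e$ (and likewise constant on each inclusion in $\Omega_B^\e$). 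Finally, testing with $v$ not vanishing on $\Gamma^\e$ and integrating by parts leaves only the interface term $\int_{\Gamma^\e}\jump{\alpha^{ref,\e}f}\,n_0^\e\cdot v\ds$, whose vanishing for all $v$ kills the remaining constants. That is the missing piece your plan gestures at; note also that for the later use in Lemma~\ref{a:lemma:coercivity} only plain monotonicity of $B_2^\e$ is actually needed.
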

\begin{proof}
We start off with proving the continuity property.
Let $f\in L^2(\Omega)$ and $u_f^\e:=E^{-1,\e}(t) e_{th}^\e(t)f$, i.e., the unique solution of
\begin{align*}
	\left\langle E^\e(t)u_f^\e,v_u\right\rangle_{{V_u}'V_u}=(e_{th}^\e(t)f,v_u)_H, \quad v_u\in V_u.
\end{align*}
Due to the estimates from Lemmas~\ref{a:mech_op} and~\ref{lemma:coupling_operators}, we instantly have $\|u_f^\e\|_{V_u}\leq C\|f\|_H$, which implies, for all $g\in H$,
\begin{align*}
	\left|\left(B_2^\e f,g\right)_H\right|\leq\frac{\gamma}{\alpha}\|e_{th}^\e(t)g\|_{{V_u}'}\|u_f^\e\|_{V_u}\leq C\|g\|_{H}\|f\|_{H},
\end{align*}
where $C>0$ is independent of both $t\in\overline{S}$ and $\e>0$.
As an immediate consequence, $(B_2^\e(\cdot)f,g)_H\in L^\infty(S)$.
Furthermore, since
\begin{align*}
	\left(B_2^\e f,g\right)_\Omega,
		&=\left\langle e_{th}^\e(t)g,E^{-1,\e}(t) e_{th}^\e(t)f\right\rangle_{{V_u}'V_u}
\end{align*}
and since $E^{-1,\e}$ is strictly monotone and symmetric, we also have that $B_2^\e$ is monotone and self-adjoint. 
\end{proof}
We establish some further regularity (w.r.t.~time) of the following operator:
\begin{align*}
	B^\e(t)\colon L^2(\Omega)\to L^2(\Omega)\quad\text{via}\quad B^\e(t)=B_1^\e(t)+B_2^\e(t).
\end{align*}
\begin{lemma}\label{lemma:time_regularity}
There is a $C>0$ independent of $t\in S$ and $\e>0$ such that 
$$
	\left|\ddt\left(B^\e(t)f,g\right)_H\right|\leq C\left\|f\right\|_H\left\|g\right\|_H
$$
for all $f,g\in H$.
\end{lemma}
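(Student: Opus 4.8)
The plan is to differentiate $(B^\e(t)f,g)_H = (B_1^\e(t)f,g)_H + (B_2^\e(t)f,g)_H$ term by term. For the first term, $(B_1^\e(t)f,g)_H = \int_\Omega c^{ref,\e}(t) f g\dx$, so $\ddt (B_1^\e(t)f,g)_H = \int_\Omega \partial_t c^{ref,\e}(t) fg\dx$. Since $c^{ref,\e} = J^\e \rho c_d$ and, by the regularity of $s$ (hence of $s^\e$) together with Assumption~(1), $J^\e \in C^1(\overline S; C^2(\overline\Omega))$ with $\e$-independent bound on $\partial_t J^\e$ in $L^\infty(S\times\Omega)$ (this bound coming from estimate~\eqref{s:estimate_movement} applied now to the time-differentiated transformation, as the footnote about $\rho_i c_{d_i}$ being differentiable anticipates), we get $|\ddt(B_1^\e(t)f,g)_H|\leq C\|f\|_H\|g\|_H$ uniformly.

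For the second term, I would write, using the self-adjointness identity from the proof of Lemma~\ref{lemma:time_operator},
\begin{equation*}
	(B_2^\e(t)f,g)_H = \frac{\gamma}{\alpha}\left\langle e_{th}^\e(t) f, E^{-1,\e}(t) e_{th}^\e(t) g\right\rangle_{{V_u}'V_u} = \frac{\gamma}{\alpha}\left\langle E^\e(t) u_f^\e(t), u_g^\e(t)\right\rangle_{{V_u}'V_u},
\end{equation*}
where $u_h^\e(t) := E^{-1,\e}(t) e_{th}^\e(t) h$ for $h\in H$. The idea is that the quantity $\frac{\gamma}{\alpha}\langle E^\e(t) u_f^\e, u_g^\e\rangle$ is a symmetric bilinear form in $(u_f^\e, u_g^\e)$, and differentiating it produces three types of terms: one where $\partial_t$ hits the coefficient $\CC^{ref,\e}$ inside $E^\e(t)$, and two where $\partial_t$ hits $u_f^\e$ or $u_g^\e$. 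The coefficient term is handled exactly as above: $\CC^{ref,\e} = J^\e (\A^\e)^T \CC \A^\e$ is $C^1$ in time with $\e$-independent bounds on $\partial_t \CC^{ref,\e}$ in $L^\infty$, combined with $\|u_f^\e\|_{V_u}\leq C\|f\|_H$, $\|u_g^\e\|_{V_u}\leq C\|g\|_H$ from Lemma~\ref{lemma:time_operator}. For the terms involving $\partial_t u_f^\e$: differentiate the defining equation $E^\e(t) u_f^\e(t) = e_{th}^\e(t) f$ in $t$ to obtain $E^\e(t)\partial_t u_f^\e = \partial_t(e_{th}^\e(t)) f - (\partial_t E^\e(t)) u_f^\e$; testing against $\partial_t u_f^\e$, using coercivity of $E^\e(t)$ and the $\e$-uniform bounds on $\partial_t e_{th}^\e$ (from $\partial_t\alpha^{ref,\e}$, same story) and $\partial_t E^\e$, together with Lemma~\ref{lemma:coupling_operators}, gives $\|\partial_t u_f^\e\|_{V_u}\leq C\|f\|_H$ uniformly in $t$ and $\e$. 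Then the term $\frac{\gamma}{\alpha}\langle E^\e(t)\partial_t u_f^\e, u_g^\e\rangle$ is bounded by $C\|\partial_t u_f^\e\|_{V_u}\|u_g^\e\|_{V_u}\leq C\|f\|_H\|g\|_H$, and symmetrically for the $u_g^\e$ term. Collecting the three contributions finishes the bound for $\ddt(B_2^\e(t)f,g)_H$.

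The main obstacle is obtaining the $\e$-independent bound $\|\partial_t u_f^\e\|_{V_u}\leq C\|f\|_H$; everything else is a routine product-rule bookkeeping exercise relying on estimates already recorded in~\eqref{lemma:eq:estimates_movement}. The subtlety is that $\partial_t E^\e(t)$ and $\partial_t e_{th}^\e(t)$ must carry the \emph{same} $\e$-weights as $E^\e(t)$ and $e_{th}^\e(t)$ themselves (the $\e^2$ on the $B$-phase elasticity, the $\e$ on the $B$-phase thermal stress), which is automatic because differentiating in time does not touch the $\e$-scalings, only the smooth-in-time coefficients $J^\e$, $(F^\e)^{-1}$, $\A^\e$. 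So the bilinear form $v_u\mapsto \langle(\partial_t E^\e(t))v_u, v_u\rangle$ is still controlled by $\|v_u\|_{V_u}^2$ with an $\e$-independent constant, and the mixed term $\langle(\partial_t e_{th}^\e(t))f, v_u\rangle$ obeys the same estimate as in Lemma~\ref{lemma:coupling_operators}. Once this is in place, the energy estimate for $\partial_t u_f^\e$ in $V_u$ follows from the coercivity of $E^\e(t)$ (Lemma~\ref{a:mech_op}), uniformly in $\e$, and the proof is complete.
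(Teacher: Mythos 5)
Your argument is correct and follows essentially the same route as the paper: split $B^\e=B_1^\e+B_2^\e$, handle $B_1^\e$ via $\partial_t c^{ref,\e}$, and reduce the $B_2^\e$ term to the uniform bound $\|\partial_t u_f^\e\|_{V_u}\leq C\|f\|_H$, which both you and the paper obtain by differentiating the defining equation $E^\e(t)u_f^\e=e_{th}^\e(t)f$ in time and testing the result with $\partial_t u_f^\e$ using coercivity and the $\e$-uniform bounds on $\partial_t\CC^{ref,\e}$, $\partial_t\alpha^{ref,\e}$. The only cosmetic difference is that you differentiate the symmetric form $\tfrac{\gamma}{\alpha}\langle E^\e u_f^\e,u_g^\e\rangle$ (three terms, requiring both $\partial_t u_f^\e$ and $\partial_t u_g^\e$), whereas the paper uses the equivalent expression $\langle e_{th}^\e(t)g,u_f^\e\rangle$ (two terms, only $\partial_t u_f^\e$); both bookkeepings lead to the same estimate.
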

\begin{proof}
Let $f, g\in H$ be given.
Then,
\begin{align*}
	\left|\ddt\left(B^\e(t)f,g\right)_H\right|
		\leq\left|\partial_t\left(c^{ref,\e}(t)\right)\right|\left\|f\right\|_H\left\|g\right\|_H+\left|\ddt\left(\left({e_{th}^\e(t)}\right)'E^{-1,\e}(t) e_{th}^\e(t) f,g\right)_H\right|.
\end{align*}
In addition,
\begin{align*}
\left(\left({e_{th}^\e(t)}\right)'E^{-1,\e}(t) e_{th}^\e(t) f,g\right)_H
	=\left\langle e_{th}^\e(t) g,E^{-1,\e}(t) e_{th}^\e(t) f\right\rangle_{{V_u}'V_u},
\end{align*}
where $u_{f}^\e(t):=E^{-1,\e}(t) e_{th}^\e(t)f$ admits the $\e$-uniform bound
$$
	\|u_f^\e\|_{V_u}\leq C\|f\|_H.
$$
Formally, provided that all derivatives exist, we have
$$
	\partial_t u_{f}^\e(t)=\partial_tE^{-1,\e}(t) e_{th}^\e(t)f+E^{-1,\e}(t)\partial_te_{th}^\e(t)f.
$$
Introducing the operators $\widetilde{E^\e}(t)\colon V_u\to {V_u}'$ and $\widetilde{e_{th}^\e}(t)\colon H\to {V_u}'$ via
\begin{align*}
	\left\langle \widetilde{E^\e}(t)u,v\right\rangle_{{V_u}'V_u}&=\int_{\Omega_A^\e}\partial_t\CC_A^{ref,\e}(t)e(u):e(v)\dx+\e^2\int_{\Omega_B^\e}\partial_t\CC_B^{ref,\e}(t)e(u):e(v)\dx,\\
	\left\langle \widetilde{e_{th}^\e}(t)f,v\right\rangle_{{V_u}'V_u}&=\int_{\Omega_A^\e}f\partial_t\alpha_A^{ref,\e}(t):\nabla v \dx+\e\int_{\Omega_B^\e}f\partial_t\alpha_B^{ref,\e}(t):\nabla v \dx,
\end{align*}
and $\widetilde{u_{f}^\e}\in V_u$ as the unique solution to
\begin{equation}\label{eq:tderiv}
	\left\langle E^\e(t)\widetilde{u_{f}^\e},v_u\right\rangle_{{V_u}'V_u}=\left\langle \widetilde{e_{th}^\e}{f},v_u\right\rangle_{{V_u}'V_u}-\left\langle \widetilde{E^\e}u_{f}^\e,v_u\right\rangle_{{V_u}'V_u},\quad v_u\in V_u,
\end{equation}
we see that this is justified and $\partial_tu_{f}^\e=\widetilde{u_{f}^\e}$. 
Furthermore, via testing equation~\eqref{eq:tderiv} with $\partial_tu_{f}^\e$ and using both the uniform bounds on the coefficients and the estimate on $u_f^\e$, inequality~\eqref{lemma:eq:estimates_movement:1}, we see that 
$$
	\left\|\partial_tu_{f}^\e(t)\right\|_{V_u}\leq C\left\|f\right\|_H,
$$
where $C>0$ is independent of $t\in\overline{S}$ and $\e>0$, and, due to 
\begin{align*}
	\ddt\left(B_2^\e(t) f,g\right)_H
		&=\int_{\Omega_A^\e}g\partial_t\alpha_A^{ref,\e}(t):\nabla u_{f}^\e(t)\dx+\int_{\Omega_A^\e}g\alpha_A^{ref,\e}(t):\nabla \partial_tu_{f}^\e(t) \dx\\
		&\hspace{0.8cm}+\e\int_{\Omega_B^\e}g\partial_t\alpha_B^{ref,\e}(t):\nabla u_{f}^\e(t)\dx+\e\int_{\Omega_B^\e}g\alpha_B^{ref,\e}(t):\nabla\partial_tu_{f}^\e(t) \dx,
\end{align*}
we then get the proposed estimate.
\end{proof}

We introduce the operator $A^\e(t)\colon V_\theta\to{V_\theta}'$ via $A^\e(t)=\sum_{i=1}^3A_i^\e(t)$.
\begin{lemma}\label{a:lemma:coercivity}
There are $\lambda_1,\lambda_2>0$ (independent of $t\in\overline{S}$ and $\e>0$) such that
$$
	\left\langle A^\e(t)v_\theta,v_\theta\right\rangle_{{V_\theta}'V_\theta}+\lambda_1\left(B^\e(t)v_\theta,v_\theta\right)_H\geq\lambda_2\left\|v_\theta\right\|_{V_\theta},\quad v_\theta\in V_\theta.
$$
\end{lemma}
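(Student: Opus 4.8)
The plan is to prove a Gårding-type inequality for the heat operator $A^\e(t)$ by estimating each of the three constituent operators $A_i^\e(t)$ from below, using the coercivity of $A_2^\e$ (the principal part) to absorb the lower-order contributions coming from the advection terms $A_1^\e$ and $A_3^\e$, at the cost of a multiple of $\|v_\theta\|_H^2$, which we then control by $\lambda_1(B^\e(t)v_\theta,v_\theta)_H$ since $B^\e(t)$ is uniformly coercive on $H$.

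First I would record the key estimates. For $A_2^\e$, by the uniform ellipticity~\eqref{lemma:eq:estimates_movement:5} of the transformed conductivities,
\begin{equation*}
	\left\langle A_2^\e(t)v_\theta,v_\theta\right\rangle_{{V_\theta}'V_\theta}=\left(K^{ref,\e}(t)\nabla v_\theta,\nabla v_\theta\right)_H\geq c\left\|\nabla v_\theta\right\|_{L^2(\Omega)^3}^2.
\end{equation*}
For $A_1^\e$ and $A_3^\e$, the bound~\eqref{lemma:eq:estimates_movement:1} gives $\e^{-1}|v^{ref,\e}|\leq C$, hence in particular $|v^{ref,\e}|\leq C\e\leq C$, so that, together with $\|B_2^\e(t)\|_{H\to H}\leq C$ from Lemma~\ref{lemma:time_operator},
\begin{equation*}
	\left|\left\langle A_1^\e(t)v_\theta,v_\theta\right\rangle_{{V_\theta}'V_\theta}\right|+\left|\left\langle A_3^\e(t)v_\theta,v_\theta\right\rangle_{{V_\theta}'V_\theta}\right|\leq C\left\|v_\theta\right\|_H\left\|\nabla v_\theta\right\|_{L^2(\Omega)^3}.
\end{equation*}
(Here one uses that $\rho c_d$ is bounded and that the product $v^{ref,\e}B_2^\e(t)v_\theta$ is still an $L^2$ function tested against $\nabla v_\theta$.) Then Young's inequality $C\|v_\theta\|_H\|\nabla v_\theta\|_{L^2}\leq \tfrac{c}{2}\|\nabla v_\theta\|_{L^2}^2+C_\delta\|v_\theta\|_H^2$ lets us absorb half of the principal part, leaving
\begin{equation*}
	\left\langle A^\e(t)v_\theta,v_\theta\right\rangle_{{V_\theta}'V_\theta}\geq\frac{c}{2}\left\|\nabla v_\theta\right\|_{L^2(\Omega)^3}^2-C_\delta\left\|v_\theta\right\|_H^2.
\end{equation*}
Finally, since $B_1^\e(t)\geq c$ by~\eqref{lemma:eq:estimates_movement:4} and $B_2^\e(t)$ is monotone (Lemma~\ref{lemma:time_operator}), we have $(B^\e(t)v_\theta,v_\theta)_H\geq c\|v_\theta\|_H^2$; choosing $\lambda_1:=C_\delta/c$ adds back $C_\delta\|v_\theta\|_H^2$, and combining with the Dirichlet-type seminorm bound (note $\|v_\theta\|_{V_\theta}^2\leq C(\|\nabla v_\theta\|_{L^2}^2+\|v_\theta\|_H^2)$) yields the claim with a suitable $\lambda_2>0$. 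One should also double-check whether the statement wants $\|v_\theta\|_{V_\theta}$ or $\|v_\theta\|_{V_\theta}^2$ on the right-hand side — the natural inequality produces the square, and I would state it that way.

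The main obstacle is bounding the advective contribution from $A_3^\e$: it involves the composite operator $B_2^\e(t)=\tfrac{\gamma}{\alpha}(e_{th}^\e(t))'E^{-1,\e}(t)e_{th}^\e(t)$, so one must be careful that its $H\to H$ operator norm is genuinely $\e$-uniform — this is exactly the content of the continuity part of Lemma~\ref{lemma:time_operator}, which in turn rests on Lemma~\ref{lemma:coupling_operators} and the $\e$-uniform invertibility of $E^\e(t)$ (Lemma~\ref{a:mech_op}), so the estimate closes up. A secondary subtlety is that $A_1^\e$ and $A_3^\e$ are not obviously sign-definite, so one genuinely needs the $B^\e$ term on the left; the scaling $|v^{ref,\e}|\le C\e$ is comfortably more than enough here, but it is worth noting that even the weaker bound $|v^{ref,\e}|\leq C$ would suffice, so the argument is robust.
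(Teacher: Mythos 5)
Your plan follows the same general strategy as the paper (G\aa rding estimate: coercivity of the principal part plus Young's inequality to absorb the advection), but it contains a genuine error in the first step that then propagates into a false claim at the end.

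The bound $\langle A_2^\e(t)v_\theta,v_\theta\rangle\geq c\|\nabla v_\theta\|_{L^2(\Omega)^3}^2$ is wrong. Tracing back the operator $A_2^\e$ to the weak form~\eqref{p:ref_weak:2}, the conductivity that appears is $K_A^{ref,\e}$ on $\Omega_A^\e$ but $\e^2K_B^{ref,\e}$ on $\Omega_B^\e$, so the correct uniform lower bound is only
\begin{equation*}
\left\langle A_2^\e(t)v_\theta,v_\theta\right\rangle_{{V_\theta}'V_\theta}\geq c\left(\left\|\nabla v_\theta\right\|_{L^2(\Omega_A^\e)}^2+\e^2\left\|\nabla v_\theta\right\|_{L^2(\Omega_B^\e)}^2\right),
\end{equation*}
which is exactly what the paper's proof records, and which mirrors the weighted norm used for $V_u$ and the a priori estimates of Theorem~\ref{a:theorem_apriori}. (The lemma's statement should be read against this $\e$-weighted seminorm, and, as you observed, with a squared right-hand side.)

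Because of this mistake, you then discard the $\e$-factor in $|v^{ref,\e}|\leq C\e$ and conclude that the weaker bound $|v^{ref,\e}|\leq C$ would suffice. With the correct coercivity it does not: the advection terms produce $\|v_\theta\|_H\|\nabla v_\theta\|_{L^2(\Omega)}$, and the piece over $\Omega_B^\e$ can only be absorbed into $\e^2\|\nabla v_\theta\|^2_{L^2(\Omega_B^\e)}$ at the cost of $\e^{-2}\|v_\theta\|_H^2$ via Young, which is not $\e$-uniform. The $\e$ gained from $|v^{ref,\e}|\leq C\e$ is precisely what rescues this, since then $\e\|\nabla v_\theta\|_{L^2(\Omega)}\leq\|\nabla v_\theta\|_{L^2(\Omega_A^\e)}+\e\|\nabla v_\theta\|_{L^2(\Omega_B^\e)}$ closes up against the weighted coercivity with $\e$-independent constants. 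So the $\e$-scaling of the interface velocity is essential here, not a robustness luxury. With those two corrections, your argument coincides with the paper's.
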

\begin{proof}
Let $v_\theta\in V_\theta$.
Due to the positivity of $c^{ref,\e}$, equation~\eqref{lemma:eq:estimates_movement:4}, and the strict monotonicity of $B_2(t)$, cf.~Lemma~\ref{lemma:time_operator}, we have
\begin{align*}
	\left(B^\e(t)v_\theta,v_\theta\right)_H\geq c\left\|v_\theta\right\|^2_H,\quad v_\theta\in V_\theta.
\end{align*}
Using the positivity of $K_i$~\eqref{lemma:eq:estimates_movement:5}, the boundedness of $\e^{-1}|v^{ref}|$~\eqref{s:ref_coefficients}, and the continuity estimate for $B_2^\e$ established in Lemma~\ref{lemma:time_operator}, we get
\begin{align*}
	\left\langle A^\e(t)v_\theta,v_\theta\right\rangle_{{V_\theta}'V_\theta}
		&\geq C_1\left(\left\|\nabla v_\theta\right\|^2_{L^2(\Omega_A^\e)}+\e^2\left\|\nabla v_\theta\right\|^2_{L^2(\Omega_B^\e)}\right)-C_2\left\|v_\theta\right\|_H^2
\end{align*}
From those estimates, we see that the statement holds.
\end{proof}
Having now these results available, we are finally able to prove the main existence theorem for the coupled thermoelasticity problem formulated in fixed coordinates.
\begin{theorem}[Existence Theorem]
Let $F_u^\e\in C^1(S;H)^3$, $F_\theta^\e\in L^2(S\times\Omega)$, and $\theta_0^\e\in L^2(\Omega)$.
Then, there exists a unique $(U^\e,\Theta^\e)\in L^2(S;V_u\times V_\theta)$ such that $\partial_t(U^\e,\Theta^\e)\in L^2(S;{V_u}'\times {V_\theta}')$, such that $\Theta^\e(0)=\theta_{0}^\e$ solving the variational system~\eqref{p:ref_weak} for fixed coordinates.
\end{theorem}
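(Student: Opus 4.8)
The plan is to decouple the system using the mechanical operator equation~\eqref{problem:mechanic_operator}, which is uniquely solvable for $U^\e(t)$ in terms of $\Theta^\e(t)$ via the bounded, coercive inverse $E^{-1,\e}(t)$ established after Lemma~\ref{a:mech_op}. Substituting $U^\e(t)=E^{-1,\e}(t)(e_{th}^\e(t)\Theta^\e(t)+F_u^\e(t)+\mathcal{H}^\e(t))$ into the heat equation~\eqref{p:ref_weak:2} yields exactly the abstract evolution equation~\eqref{a:operator_heat} for $\Theta^\e$ alone, where the dissipation coupling has been absorbed into the operator $B_2^\e(t)$ and the right-hand side $\mathcal{F}_u^\e(t)$; note that $\mathcal{F}_u^\e$ is well defined because $F_u^\e\in C^1(S;H)^3$ by hypothesis (and $\mathcal{H}^\e$ is time-independent in its structure, with $H_\Gamma^{ref,\e}\in L^\infty$ by~\eqref{s:estimate_movement}, so the required time derivative exists). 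The problem thus reduces to solving a linear parabolic equation of the form $\partial_t(B^\e(t)\Theta^\e)+A^\e(t)\Theta^\e=G^\e(t)$ in the Gelfand triple $V_\theta\hookrightarrow H\hookrightarrow V_\theta'$, with $G^\e=F_\theta^\e-\mathcal{F}_u^\e-\mathcal{W}_\Gamma^\e\in L^2(S;V_\theta')$ and initial datum $\Theta^\e(0)=\theta_0^\e\in H$.

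The existence and uniqueness for this reduced equation I would obtain from the standard theory of linear parabolic problems with time-dependent operators (e.g. Lions' theorem, or the Rothe/Galerkin method for equations of the type $\frac{d}{dt}(Bu)+Au=f$ as in Showalter's monograph). The hypotheses to verify are precisely the ones assembled in the preceding lemmas: $B^\e(t)=B_1^\e(t)+B_2^\e(t)$ is, for each $t$, a bounded self-adjoint operator on $H$ that is uniformly positive (positivity of $c^{ref,\e}$ from~\eqref{lemma:eq:estimates_movement:4} plus monotonicity of $B_2^\e$, Lemma~\ref{lemma:time_operator}), and $t\mapsto B^\e(t)$ is Lipschitz in the sense of Lemma~\ref{lemma:time_regularity} (the bound on $\frac{d}{dt}(B^\e(t)f,g)_H$); the operators $A_i^\e(t)$ are bounded from $V_\theta$ to $V_\theta'$ uniformly in $t$, and the Gårding-type inequality $\langle A^\e(t)v,v\rangle+\lambda_1(B^\e(t)v,v)_H\geq\lambda_2\|v\|_{V_\theta}^2$ holds by Lemma~\ref{a:lemma:coercivity}. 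Together these give a unique $\Theta^\e\in L^2(S;V_\theta)$ with $\partial_t\Theta^\e\in L^2(S;V_\theta')$ (hence $\Theta^\e\in C(\overline S;H)$ so the initial condition makes sense) and the energy estimate; one applies a Galerkin scheme in $V_\theta$, derives a priori bounds by testing with $\Theta^\e$ (using coercivity of $A^\e$ modulo the $B^\e$-term, which is handled by a Gronwall argument after the substitution $\Theta^\e\mapsto e^{-\lambda_1 t}\Theta^\e$ or equivalently by working with $\frac12\frac{d}{dt}(B^\e\Theta^\e,\Theta^\e)_H$ and controlling the extra term $\frac12(\partial_t B^\e\,\Theta^\e,\Theta^\e)_H$ via Lemma~\ref{lemma:time_regularity}), and passes to the limit.

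Having produced $\Theta^\e$, I then \emph{define} $U^\e(t):=E^{-1,\e}(t)(e_{th}^\e(t)\Theta^\e(t)+F_u^\e(t)+\mathcal{H}^\e(t))$; boundedness and coercivity of $E^{-1,\e}(t)$ uniformly in $t$ together with $\Theta^\e\in L^2(S;H)$ give $U^\e\in L^2(S;V_u)$, and one checks $\partial_t U^\e\in L^2(S;V_u')$ by differentiating this identity (as in the proof of Lemma~\ref{lemma:time_regularity}, where $\partial_t u_f^\e$ is characterized via equation~\eqref{eq:tderiv} using $\widetilde{E^\e}$ and $\widetilde{e_{th}^\e}$), using $F_u^\e\in C^1(S;H)$, $H_\Gamma^{ref,\e}\in L^\infty$, and the $L^\infty$ bounds on $\partial_t$ of the transformed coefficients implicit in the regularity of $s$. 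The pair $(U^\e,\Theta^\e)$ then satisfies~\eqref{p:ref_weak:1} by construction and~\eqref{p:ref_weak:2} because that is precisely~\eqref{a:operator_heat} rewritten back in terms of $U^\e$. Uniqueness follows from the uniqueness for the reduced equation together with the unique solvability of~\eqref{problem:mechanic_operator}: if $(U_1,\Theta_1)$ and $(U_2,\Theta_2)$ are two solutions, their difference in the temperature solves the homogeneous reduced equation with zero initial data, hence vanishes, and then the difference in displacement solves $E^\e(t)(U_1-U_2)=0$, hence vanishes too.

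I expect the main obstacle to be the time-dependence of the operators $B^\e(t)$ — equations of the form $\frac{d}{dt}(B(t)u)+A(t)u=f$ with $B$ varying in $t$ are genuinely more delicate than the autonomous case, and the coercivity available (Lemma~\ref{a:lemma:coercivity}) is only of Gårding type, with the lower-order defect absorbed by $B^\e$ rather than by a coercive zeroth-order term; so the energy estimate must be organized carefully around $\frac{d}{dt}(B^\e(t)\Theta^\e,\Theta^\e)_H$ and the compensating term controlled by Lemma~\ref{lemma:time_regularity}, and one must be slightly careful that the Galerkin approximations inherit a uniform bound on $\partial_t\Theta^\e_n$ in $V_\theta'$ despite $B^\e$ not commuting with the projections. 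A secondary technical point is confirming that $\mathcal{F}_u^\e\in L^2(S;V_\theta')$ and that the regularity $F_u^\e\in C^1(S;H)^3$ is exactly what is needed to differentiate the term $(\frac\gamma\alpha(e_{th}^\e)'E^{-1,\e}(F_u^\e+\mathcal{H}^\e))$ in time, which is routine given the earlier estimates but should be stated explicitly.
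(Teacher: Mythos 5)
Your proposal follows essentially the same route as the paper: reduce to the abstract parabolic problem~\eqref{a:operator_heat} for $\Theta^\e$ alone, verify the hypotheses of the Showalter-type existence theory for $\frac{d}{dt}(B(t)u)+A(t)u=f$ using Lemmas~\ref{lemma:time_operator}, \ref{lemma:time_regularity}, and~\ref{a:lemma:coercivity} (the paper cites~\cite[Ch.~III, Prop.~3.2--3.3]{S96} directly), and then reconstruct $U^\e(t)=E^{-1,\e}(t)(e_{th}^\e(t)\Theta^\e(t)+F_u^\e(t)+\mathcal{H}^\e(t))$. Your extra remarks on the Galerkin details and the time-differentiability of $\mathcal{H}^\e$ (which follows from the smoothness of $s$, not merely $H_\Gamma^{ref,\e}\in L^\infty$, but the conclusion is correct) are consistent elaborations of the same argument.
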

\begin{proof}
In light of the coercivity-type estimate established in Lemma~\ref{a:lemma:coercivity} and the continuity estimate for $B_2'(t)$ from Lemma~\ref{lemma:time_regularity}, we see (\cite[Chapter III, Proposition 3.2 and Proposition 3.3]{S96}) that there is  a unique $\Theta^\e\in L^2(S; V_\theta)$ such that $\partial_t\Theta^\e\in L^2(S;{V_\theta}')$, $\Theta^\e(0)=\theta_{0}^\e$, and
$$
	\ddt\left(B^\e(t)\Theta^\e\right)+A^\e(t)\Theta^\e+\mathcal{W}_\Gamma^\e(t)=F_\theta^\e(t)-\mathcal{F}^\e_u(t)\qquad\text{in}\quad {V_\theta}'.
$$
Defining, for every $t\in\overline{S}$, \footnote{Note that, since $\Theta^\e\in C(\overline{S};H)$, this is well-defined.} 
$$
	U^\e(t):=E^{-1,\e}(t)\left(e_{th}^\e(t)\Theta^\e(t)+F_u^\e(t)+\mathcal{H}^\e(t)\right)\in V_u,
$$
we see that $\partial_tU^\e\in L^2({S;V_u}')$ and that $U^\e(t)$ solves the mechanical part given via the variational equation~\eqref{problem:mechanic_operator}.
\end{proof}
Transforming the solution $(U^\e,\Theta^\e)$ back to moving coordinates, i.e., $u^\e(t,x)=U^\e(t,s^\e(t,x))$ and $\theta^\e(t,x)=\Theta^\e(t,s^\e(t,x))$, we then get the solution to the original problem given by equations~\eqref{p:full_problem_moving:1}-\eqref{p:full_problem_moving:9}.
In the following theorem, we establish the {\em a priori} estimates needed to justify the homogenization process.
\begin{theorem}[$\e$-independent {\em a priori} estimates]\label{a:theorem_apriori}
Assuming that
\begin{align*}
	\|F_u^\e\|_{C^1(S;L^2(\Omega))^3}+\|F_\theta^\e\|_{L^2(S\times\Omega)}+\|\theta_0^\e\|_{L^2(\Omega)}\leq C,
\end{align*}
we have
\begin{multline}
	\|\Theta^\e\|_{L^\infty(S;H)}+\|\nabla\Theta^\e\|_{L^2(S\times\Omega_A^\e)^3}+\e\|\nabla\Theta^\e\|_{L^2(S\times\Omega_B^\e)^3}\\
	+\|U^\e\|_{L^\infty(S;H)^3}+\|\nabla U^\e\|_{L^\infty(S;L^2(\Omega_A^\e))^{3\times3}}+\e\|\nabla U^\e\|_{L^\infty(S;L^2(\Omega_B^\e))^{3\times3}}\leq C,
\end{multline}
where $C$ is independent of the choice of $\e$.
\end{theorem}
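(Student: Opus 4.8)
The plan is to derive everything from a single energy identity for the heat subproblem written in the form~\eqref{a:operator_heat}, and then to read off the bounds on $U^\e$ from the algebraic relation expressing $U^\e$ through $\Theta^\e$. First I would test~\eqref{a:operator_heat} with $v_\theta=\Theta^\e(t)$; this is legitimate within the Lions--Showalter framework (\cite{S96}) already used for the existence theorem, and, since $B^\e(t)$ is self-adjoint with the time-regularity furnished by Lemma~\ref{lemma:time_regularity}, the degenerate time term integrates to $\frac{d}{dt}\frac12\big(B^\e(t)\Theta^\e(t),\Theta^\e(t)\big)_H$ up to a remainder bounded by $C\|\Theta^\e(t)\|_H^2$. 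The elliptic term is treated with the G\aa rding-type inequality of Lemma~\ref{a:lemma:coercivity}: $\langle A^\e(t)\Theta^\e,\Theta^\e\rangle_{{V_\theta}'V_\theta}+\lambda_1\big(B^\e(t)\Theta^\e,\Theta^\e\big)_H$ controls $\|\Theta^\e\|_H^2+\|\nabla\Theta^\e\|_{L^2(\Omega_A^\e)}^2+\e^2\|\nabla\Theta^\e\|_{L^2(\Omega_B^\e)}^2$ from below; the bulk forcing $F_\theta^\e-\mathcal{F}_u^\e$ is absorbed via Cauchy--Schwarz and Young, using that $\mathcal{F}_u^\e$ is uniformly bounded in $L^2(S;{V_\theta}')$ thanks to $F_u^\e\in C^1(S;H)^3$ and the regularity of $s$, and noting that the advective pieces (in $A^\e$ and in $\mathcal{F}_u^\e$) are harmless because by~\eqref{lemma:eq:estimates_movement} one has $\e^{-1}|v^{ref,\e}|\le C$, so the $\Omega_B^\e$-gradient is always paired with exactly the weight $\e$ occurring in the controlled norm.

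The delicate term is the interface contribution $\langle\mathcal{W}_\Gamma^\e(t),\Theta^\e(t)\rangle_{{V_\theta}'V_\theta}=\int_{\Gamma^\e}W_\Gamma^{ref,\e}(t)\,\Theta^\e\ds$. Using $\e^{-1}\|W_\Gamma^{ref,\e}\|_{L^\infty(\Gamma^\e)}\le C$ and $|\Gamma^\e|\le C\e^{-1}$ one gets $|\langle\mathcal{W}_\Gamma^\e(t),\Theta^\e(t)\rangle|\le C\e^{1/2}\|\Theta^\e(t)\|_{L^2(\Gamma^\e)}$, and then I would invoke the scaled trace inequality $\e\|w\|_{L^2(\Gamma^\e)}^2\le C\big(\|w\|_{L^2(\Omega_B^\e)}^2+\e^2\|\nabla w\|_{L^2(\Omega_B^\e)}^2\big)$ — obtained by transplanting the trace theorem on $Y_B$ to each $\e$-cell and summing over cells — together with Young's inequality to bound this contribution by $\delta\big(\|\Theta^\e\|_H^2+\e^2\|\nabla\Theta^\e\|_{L^2(\Omega_B^\e)}^2\big)+C_\delta$. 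For $\delta$ small the gradient part is swallowed by the coercivity of $A^\e$ and the rest is of Gr\"onwall type. The curvature forcing $\mathcal{H}^\e$ (entering through $\mathcal{F}_u^\e$ and, below, through $U^\e$) is handled identically: the $\e^2$ weight in~\eqref{p:ref_weak:1}, together with $\e\|H_\Gamma^{ref,\e}\|_{L^\infty(\Gamma^\e)}\le C$ and the same scaled trace estimate now combined with the Korn-type Lemma~\ref{lemma:korn}, yields $\|\mathcal{H}^\e(t)\|_{{V_u}'}\le C$ uniformly in $\e$.

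Collecting the above, integrating the differential inequality over $(0,t)$, and using the uniform positivity $\big(B^\e(t)w,w\big)_H\ge c\|w\|_H^2$ (from the positivity of $c^{ref,\e}$ in~\eqref{lemma:eq:estimates_movement} and the monotonicity of $B_2^\e$) to turn the left-hand side into $c\|\Theta^\e(t)\|_H^2+\int_0^t\big(\|\nabla\Theta^\e\|_{L^2(\Omega_A^\e)}^2+\e^2\|\nabla\Theta^\e\|_{L^2(\Omega_B^\e)}^2\big)\dtau$, an application of Gr\"onwall's lemma with $\|\Theta^\e(0)\|_H=\|\theta_0^\e\|_H\le C$ gives the first line of the asserted estimate. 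For the mechanical part I would use $U^\e(t)=E^{-1,\e}(t)\big(e_{th}^\e(t)\Theta^\e(t)+F_u^\e(t)+\mathcal{H}^\e(t)\big)$: since $E^{-1,\e}(t)$ is bounded uniformly in $t$ and $\e$ (coercivity of $E^\e$, Lemma~\ref{a:mech_op}), $\|e_{th}^\e(t)\Theta^\e(t)\|_{{V_u}'}\le C\|\Theta^\e(t)\|_H$ by Lemma~\ref{lemma:coupling_operators}, and $\|F_u^\e(t)\|_{{V_u}'}+\|\mathcal{H}^\e(t)\|_{{V_u}'}\le C$, we obtain $\|U^\e(t)\|_{V_u}\le C\big(1+\|\Theta^\e(t)\|_H\big)$ for a.e.\ $t$; taking the supremum in $t$ and using Lemma~\ref{lemma:korn} to translate the $V_u$-norm into $\|U^\e\|_{L^2(\Omega)^3}$ and the weighted gradient norms over $\Omega_A^\e$ and $\Omega_B^\e$ produces the second line. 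The main obstacle is the careful bookkeeping of the $\e$-powers in the interface terms — one must produce the scaled trace inequality with exactly the exponents that let the surface contributions be split between the coercive gradient terms and a Gr\"onwall remainder rather than generating an uncontrollable $\e^{-1}$ factor; a secondary point is verifying that the data term $\mathcal{F}_u^\e$, which already contains a time derivative of the mechanical forcing, is $\e$-uniformly bounded, for which the $C^1(S;H)$ assumption on $F_u^\e$ and the time-regularity of the transformation coefficients are precisely what is needed.
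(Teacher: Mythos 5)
Your proposal is correct and follows essentially the same route as the paper's proof: test the heat evolution~\eqref{a:operator_heat} with $\Theta^\e$, use the half-derivative identity for $(B^\e(t)\Theta^\e,\Theta^\e)_H$ together with the G\r{a}rding estimate of Lemma~\ref{a:lemma:coercivity}, handle the interface term $\mathcal{W}_\Gamma^\e$ via the $\e$-scaled trace inequality and Young, apply Gr\"onwall, and then bound $U^\e$ through $U^\e(t)=E^{-1,\e}(t)\big(e_{th}^\e(t)\Theta^\e(t)+F_u^\e(t)+\mathcal{H}^\e(t)\big)$ and the Korn-type Lemma~\ref{lemma:korn}. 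The only difference is cosmetic -- you state the trace estimate on $\Omega_B^\e$ rather than on $\Omega$ and spell out the $\e$-exponent bookkeeping for the $\mathcal{W}_\Gamma^\e$ and $\mathcal{H}^\e$ contributions more explicitly than the paper does.
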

\begin{proof}
Testing the variational equality~\eqref{a:operator_heat} with $\Theta^\e$, using the identity
\begin{align*}
	\left(\partial_t\left(B^\e(t)v_\theta\right),v_\theta\right)_H
		&=\left(\partial_t\left(B^\e(t)\right)v_\theta,v_\theta\right)_H+\frac{1}{2}\ddt\left(B^\e(t)v_\theta,v_\theta\right)_H,
\end{align*}
and the uniform operator estimates established in Lemmas~\ref{a:mech_op}-\ref{lemma:time_operator} and in Lemma~\ref{a:lemma:coercivity}, we get
\begin{multline*}
	\ddt\left(B^\e(t)\Theta^\e,\Theta^\e\right)_H+\left\|\nabla \Theta^\e\right\|_{L^2(\Omega_A^\e)}^2+\e^2\left\|\nabla \Theta^\e\right\|_{L^2(\Omega_B^\e)}^2\\
		\leq C\left(\left\|\Theta^\e\right\|_H^2+\left\|\mathcal{F}_\theta^\e(t)\right\|^2_H+\left\|\mathcal{F}_u^\e(t)\right\|^2_{{V_\theta}'}+\left\|\mathcal{W}^\e_\Gamma(t)\right\|_{L^2(\Gamma^\e)}\left\|\Theta^\e\right\|_{L^2(\Gamma^\e)}\right).
\end{multline*}
For the temperature on $\Gamma^\e$, we have the following $\e$-trace estimate, see, e.g.,~\cite{A95},
\begin{equation}\label{a:trace_estimate}
	\e\left\|\Theta^\e\right\|_{L^2(\Gamma^\e)}^2\leq C\left(\left\|\Theta^\e\right\|^2_H+\e^2\left\|\nabla\Theta^\e\right\|^2_H\right).
\end{equation}
Integrating over $(0,t)$ and using the positivity of $B_1^\e$ and the monotonicity of $B_2^\e$, we then get
\begin{multline*}
	\left\|\Theta^\e(t)\right\|^2_H+\int_0^t\left\|\nabla \Theta^\e(\tau)\right\|_{L^2(\Omega_A^\e)}^2\dtau+\e^2\int_0^t\left\|\nabla \Theta^\e(\tau)\right\|_{L^2(\Omega_B^\e)}^2\dtau\\
	\leq C\Bigg(\left\|\Theta^\e(0)\right\|^2_H+\int_0^t\left\|\Theta^\e(\tau)\right\|_H^2\dtau+\int_0^t\left\|\mathcal{F}_\theta^\e(\tau)\right\|_H^2\dtau\\
	+\int_0^t\left\|\mathcal{F}_u^\e(\tau)\right\|_{{V_\theta}'}^2\dtau+\int_0^t\left\|\mathcal{W}^\e_\Gamma(\tau)\right\|^2_{L^2(\Gamma^\e)}\dtau\Bigg).
\end{multline*}
A direct application of \emph{Gronwall's inequality} then yields the desired estimates for the temperatures.
Testing equation~\eqref{problem:mechanic_operator} with $U^\e$ and using the trace estimate~\eqref{a:trace_estimate}, we get
\begin{equation*}
	\|U^\e(t)\|^2_{V_u}\leq C\left(\|\Theta^\e(t)\|^2_H
	+\|F^\e(t)\|^2_{H}+\e^2\|H_\Gamma^{ref,\e}(t)\|^2_{L^2(\Gamma^\e)}\right).
\end{equation*}
Via the Korn-type estimate given by Lemma~\ref{lemma:korn}, we see that the estimates for the deformations are valid.
\end{proof}
\section{Homogenization}\label{section:homogenization}
In the following, we use the notion of two-scale convergence to derive a homogenized model.
Our basic references for homogenization, in general, and two-scale convergence, in particular, are \cite{Al92, L02, N89, T09}.
For the convenience of the reader, we recall the definition of two-scale convergence:
\begin{definition}[Two-scale convergence]
A sequence $v^\e\in L^2(S\times\Omega)$ is said to two scale converge two a limit function $v\in L^2(S\times\Omega\times Y)$ ($v^\e\twosc v$) if
\begin{equation*}
	\lim_{\e\to0}\int_S\int_\Omega v^\e(t,x)\p\exe\dx\dt=\int_S\int_\Omega\int_Y v(t,x,y)\p(x,y)\dy\dx\dt
\end{equation*}
for all $\p\in L^2(S\times\Omega;C_\#(Y))$.
\end{definition}
In addition to the two-scale convergence, we introduce the notion of what is sometimes called \emph{strong two-scale convergence}.
This concept is needed to pass to the limit for some products of two-scale convergent sequences.

\begin{definition}[Strong two-scale convergence]
A sequence $v^\e\in L^2(S\times\Omega)$ is said to strongly two scale converge to a limit function $v\in L^2(S\times\Omega\times Y)$ ($v^\e\stwosc u$) if both $v^\e\twosc v$ and
\begin{align*}
\lim_{\e\to0}\|v^\e\|_{L^2(S\times\Omega)}=\|v\|_{L^2(S\times\Omega\times Y)}.
\end{align*}
\end{definition}
It can be shown, see, e.g.,~\cite[Theorem 18]{L02}\footnote{Combined with the remark succeeding the proof of Theorem~18.}, that if $u^\e\twosc u$ and $v^\e\stwosc v$, we then have
$$\int_S\int_{\Omega}u^\e(t,x)v^\e(t,x)\p\left(x,\frac{x}{\e}\right)\di{x}\di{t}\to\int_S\int_\Omega\int_Yu(t,x,y)v(t,x,y)\p(x,y)\di{y}\di{x}\di{t}$$
for all $\p\in C^\infty_0(\Omega;C^\infty_\#(Y))$.

In the following, for a function $v^\e\in\Omega_i^\e$, $i\in\{A,B\}$, we denote its zero extension to the whole of $\Omega$ with $\chi_i^\e v^\e$.
Furthermore, $W^{1,2}_\#(Y)$ is defined as the closure of $C^1_\#(Y)$ w.r.t.~$W^{1,2}$-Norm, and $W^{1,2}_\per(Y_A)$ as the subspace of $W^{1,2}_\#(Y)$ with zero average.
For functions depending on both $x\in\Omega$ and $y\in Y$, we denote derivatives w.r.t.~$y\in Y$ with the subscript $Y$, i.e., $e_Y$, $\nabla_Y$, $\dive_Y$.

By the $\e$-independent estimates established in Theorem~\ref{a:theorem_apriori}, we have the following two-scale limits.
\begin{theorem}[Two-scale limits]\label{h:theorem_twoscale}
There are functions 
\begin{alignat*}{2}
	u_A&\in L^2(S;V_u),\quad&U_B&\in L^2(S\times\Omega;W^{1,2}_\#(Y)^3),\\
	\theta_A&\in L^2(S;V_\theta),\quad&\Theta_B&\in L^2(S\times\Omega;W^{1,2}_\#(Y)),\\
	\widetilde{U_A}&\in L^2(S\times\Omega;W^{1,2}_\#(Y)^3),\quad&\widetilde{\Theta_A}&\in L^2(S\times\Omega;W^{1,2}_\#(Y))
\end{alignat*}
such that
\begin{alignat*}{2}
	\chi_A^\e U_A^\e&\twosc \chi_Au_A,&\quad \chi_A^\e\nabla U_A^\e&\twosc\chi_A\nabla u_A+\chi_A\nabla_Y\widetilde{U_A},\\
	\chi_B^\e U_B^\e&\twosc \chi_BU_B,&\quad \chi_B^\e\nabla U_B^\e&\twosc\chi_B\nabla_YU_B,\\
	\chi_A^\e \Theta_A^\e&\twosc \chi_A\theta_A,&\quad \chi_A^\e\nabla\Theta_A^\e&\twosc\chi_A\nabla \theta_A+\chi_A\nabla_Y\widetilde{\Theta_A},\\
	\chi_B^\e \Theta_B^\e&\twosc\chi_B\Theta_B,&\quad \chi_B^\e\nabla\Theta_B^\e&\twosc\chi_B\nabla_Y\Theta_B.
\end{alignat*}
\end{theorem}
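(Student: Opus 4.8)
The plan is to extract these two-scale limits from the uniform \emph{a priori} bounds of Theorem~\ref{a:theorem_apriori} using the standard compactness results for two-scale convergence. First I would apply the basic two-scale compactness theorem (see \cite{Al92}) to each of the bounded sequences. Since $\|\Theta^\e\|_{L^\infty(S;H)}\leq C$ and $\|U^\e\|_{L^\infty(S;H)^3}\leq C$, after passing to a subsequence we obtain two-scale limits of $\chi_A^\e U_A^\e$, $\chi_B^\e U_B^\e$, $\chi_A^\e\Theta_A^\e$, $\chi_B^\e\Theta_B^\e$ in $L^2(S\times\Omega\times Y)$; the characteristic-function structure of the limit (i.e.\ that the limit of $\chi_i^\e v^\e$ is supported on $\Omega\times Y_i$ and can be written $\chi_i v_i$) follows because $\chi_i^\e \twosc \chi_{Y_i}$ and $\chi_i^\e$ is idempotent, which forces the limit to vanish on $Y\setminus Y_i$. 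For the gradients, the bounds $\|\nabla U^\e\|_{L^\infty(S;L^2(\Omega_A^\e))}\leq C$ and $\e\|\nabla U^\e\|_{L^\infty(S;L^2(\Omega_B^\e))}\leq C$ (and the analogous ones for $\Theta^\e$) put us exactly in the two scalings covered by the structure theorems: the $O(1)$-gradient case on the connected phase $A$ yields a limit of the form $\nabla u_A + \nabla_Y\widetilde{U_A}$ with $u_A\in L^2(S;V_u)$ and a corrector $\widetilde{U_A}\in L^2(S\times\Omega;W^{1,2}_\#(Y)^3)$ (cf.\ \cite[Proposition~1.14]{Al92}), while the $O(\e)$-gradient case on the disconnected inclusions $B$ yields a purely oscillatory limit $\nabla_Y U_B$ with $U_B\in L^2(S\times\Omega;W^{1,2}_\#(Y)^3)$.

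The next step is to identify the limit spaces correctly. For the $A$-phase deformation, $U_A^\e\in V_u=W_0^{1,2}(\Omega)^3$ together with the extension operator from Lemma~\ref{lemma:korn} (or directly the uniform extension of \cite[Chapter~1.4]{OSY92}) gives a bounded sequence in $L^2(S;W^{1,2}(\Omega)^3)$ after extension; its weak limit is $u_A$, and since the Dirichlet boundary condition is preserved under weak limits we get $u_A\in L^2(S;V_u)$. The periodicity of the correctors $\widetilde{U_A},\widetilde{\Theta_A}$ and of $U_B,\Theta_B$ in $Y$ is built into the structure theorem (the correctors live in $W^{1,2}_\#(Y)$); here the geometric assumption $\partial\Omega_B^\e\cap\partial\Omega=\emptyset$ together with Assumptions~(5)--(7) on $s$ (points near $\partial Y$ are undeformed) is what makes the periodic-cell picture legitimate. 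For the temperature one argues identically, using $\Theta_A^\e\in V_\theta=W^{1,2}(\Omega)$ (no boundary condition, hence $\theta_A\in L^2(S;V_\theta)$ with no trace constraint).

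The only genuinely delicate point — and the one I would flag as the main obstacle — is the compatibility/consistency of the gradient two-scale limits: one must check that the limit of $\chi_A^\e\nabla U_A^\e$ really is the distributional gradient (in the two-scale sense) of the limit of $\chi_A^\e U_A^\e$, i.e.\ that the macroscopic part $\nabla u_A$ and the cell part $\nabla_Y\widetilde{U_A}$ fit together. This is the content of the structure theorem but it requires care because the sequence is only defined on the perforated domain $\Omega_A^\e$; the resolution is precisely the uniform extension operator (Lemma~\ref{lemma:korn} supplies the Korn-type bound that makes the extension $\e$-uniform in $W^{1,2}$), after which the classical argument of \cite[Proposition~1.14]{Al92} applies verbatim on the extended sequence and one then restricts back. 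A second, milder point is that all bounds are in $L^\infty(S;\cdot)$ rather than $L^2(S;\cdot)$, so strictly one first works in $L^2(S\times\Omega)$ to get two-scale limits and separately records the $L^\infty(S;H)$-bound on the limits by lower semicontinuity; since $S$ is a bounded interval the two-scale framework in $L^2(S\times\Omega)$ is all that is needed for the convergences as stated. Finally, a diagonal argument over the finitely many sequences extracts a single subsequence along which all eight convergences hold simultaneously, completing the proof.
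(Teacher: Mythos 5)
Your proposal is correct and matches what the paper does implicitly: the paper offers no explicit proof for this theorem, merely citing the $\e$-independent bounds of Theorem~\ref{a:theorem_apriori} and letting the standard two-scale compactness results do the work, which is exactly your strategy. You correctly separate the $O(1)$-gradient regime on the connected matrix (handled via the uniform extension of~\cite[Ch.~1.4]{OSY92}, whose $\e$-uniform bound is supplied by Lemma~\ref{lemma:korn}) from the $\e$-scaled-gradient regime on the disconnected inclusions (cf.~\cite[Proposition~1.14]{Al92}), and you rightly flag both the $L^\infty(S;H)$-versus-$L^2(S\times\Omega)$ bookkeeping and the need for a single diagonal subsequence. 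Two small remarks: (i) for the $B$-phase the a~priori bound controls $\e\|\nabla U^\e\|_{L^2(S\times\Omega_B^\e)}$, so the two-scale statement should properly read $\e\,\chi_B^\e\nabla U_B^\e\twosc\chi_B\nabla_Y U_B$ (the missing factor of $\e$ in the theorem as printed is presumably a typographical slip in the paper, and your phrasing handles the scaling correctly); (ii) the role you attribute to Assumptions~(5)--(7) is somewhat overstated---they ensure that the transformation $s^\e$ is smooth and compatible with the periodic cell structure, but the two-scale compactness itself rests only on the $\e$-periodicity of $\Omega_i^\e$, the $L^2$-bounds, and the geometric assumption $\partial\Omega_B^\e\cap\partial\Omega=\emptyset$ needed for the extension operator.
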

\begin{remark}
Note that, we distinguish between functions that depend on $y\in Y$ and functions independent of $y\in Y$, by using capitalized letters for the former and lowercase letters for the other.

For a function $u=u(t,x,y)$, we set the corresponding transformed function as $\widehat{u}(t,x,y)=u(t,x,s(t,x,y))$.
To keep the notation consistent, we also set $U_B(t,x,y)=u_B(t,x,s(t,x,y))$ and $\Theta_B(t,x,y)=\theta_B(t,x,s(t,x,y))$.
\end{remark}

Now, we introduce the homogenized transformation related quantities (all elements of $L^\infty(S\times\Omega\times Y)$)
\begin{subequations}\label{h:trafo_quantities}
\begin{alignat}{2}
F&\colon\overline{S}\times\overline{\Omega}\times\overline{Y}\to\R^{3\times3},\  &
	F(t,x,y)&:=\nabla_Y s(t,x,y),\\
J&\colon\overline{S}\times\overline{\Omega}\times\overline{Y}\to\R,\quad&
	J(t,x,y)&:=\det\left(\nabla_Y s(t,x,y)\right),\\
v&\colon\overline{S}\times\overline{\Omega}\times\overline{Y}\to\R^{3},\quad&
	v(t,x,y)&:=\partial_ts(t,x,y),\\
W_\Gamma&\colon\overline{S}\times\overline{\Omega}\times\Gamma\to\R,\quad&
	W_\Gamma(t,x,y)&:=v(t,x,y)\cdot n(t,s(t,x,y)),\\
H_\Gamma&\colon\overline{S}\times\overline{\Omega}\times\Gamma\to\R,\quad&
	H_\Gamma(t,x,y)&:=-\dive_Y\left(F^{-1}(t,x,y)n(t,s(t,x,y))\right)
\end{alignat}
\end{subequations}
and see that they are strong two-scale limits of their $\e$-periodic counterpart
\begin{align*}
F^\e\stwosc F,\quad J^\e\stwosc J,\quad \frac{1}{\e}v^\e\stwosc v,\quad\frac{1}{\e}W_\Gamma^\e\stwosc W_\Gamma,\quad\e H_\Gamma^\e\stwosc H_\Gamma.
\end{align*}
This can be seen by using the regularity of the function $s$, the fact that $\e\left[\frac{x}{\e}\right]\to x$, and using~\cite[Lemma 1.3.]{Al92}.\footnote{Note that ignoring the ``mismatch'' $x-\e\left[\frac{x}{\e}\right]$, we basically have $F^\e(t,x)\approx F\left(t,x,\frac{x}{\e}\right)$}.
For a similar situation in the case of peridodic unfolding, we refer to~\cite[Lemma 3.4.6]{D12}.
As a consequence, we also have strong two-scale convergence for the transformed coefficients, see~\eqref{s:ref_coefficients:1}-\eqref{s:ref_coefficients:9}, the limits of whose are labeled via a $ref$-superscript.

We assume that, for $i\in\{A,B\}$ and almost all $t\in S$, there are functions $f_{u_i}(t)$, $f_{\theta_i}(t)$, and $\theta_{i0}\in L^2(\Omega\times Y)$, such that $\widehat{f_{u_i}}\in C^1(S;L^2(\Omega\times Y)^3)$ and $\widehat{f_{u_i}}\in L^2(S\times\Omega\times Y)$, and such that
\begin{alignat*}{2}
\chi_i^\e\widehat{f_{u_i}^\e}^\e\twosc\chi_i\widehat{f_{u_i}},\quad\chi_i^\e\widehat{f_{\theta_i}}^\e\twosc\chi_i\widehat{f_{\theta_i}},\quad\chi_i^\e\theta_{0}^\e\twosc\chi_i\theta_{i0}.
\end{alignat*}
In particular, this implies
\begin{alignat*}{2}
\chi_i^\e f_{u_i}^{ref,\e}\twosc \chi_iJ\widehat{f_{u_i}}=:\chi_if_{u_i}^{ref},\quad\chi_i^\e f_{\theta_i}^{ref,\e}\twosc\chi_iJ\widehat{f_{\theta_i}}=:\chi_if_{\theta_i}^{ref}.
\end{alignat*}

\subsection{Homogenization of the mechanical part}
Let $v_A\in C_0^\infty(\Omega)^3$ and $v_B\in C^\infty(\overline{\Omega};C^\infty_\#(Y))^3$ such that $v_A(x)=v_B(x,y)$ for all $(x,y)\in\Omega\times\Gamma $.
Furthermore, let $\widetilde{v}_A\in C^\infty(\overline{\Omega};C^\infty_\#(Y))^3$.
We introduce functions 
\begin{align*}
v_A^\e&\colon\Omega\to\R^3,\quad v_A^\e(x):=v_A(x)+\e\widetilde{v}_A\left(x,\frac{x}{\e}\right),\\
v_B^\e&\colon\Omega\to\R^3,\quad v_B^\e(x):=v_B\left(x,\frac{x}{\e}\right)+\e\widetilde{v}_A\left(x,\frac{x}{\e}\right),\\
v^\e&\colon\Omega\to\R^3,\quad v^\e(x):=\begin{cases}v^\e_A(x),\ x\in\Omega_A,\\ v^\e_B(x),\ x\in\Omega_B.\end{cases}
\end{align*}
As a consequence, $v^\e\in W_0^{1,2}(\Omega)^3$.
Choosing $v^\e$ as a test function and letting $\e\to0$, we then get, up to a subsequence, the following limit problem:
\begin{multline}\label{h:limit_coupled}
\int_{\Omega}\int_{Y_A}\CC_A^{ref}\big(e(u_A)+e_Y(\widetilde{U}_A)\big):\big(e(v_A)+e_Y(\widetilde{v}_A)\big)\dy\dx\\
	+\int_{\Omega}\int_{Y_B}\CC_B^{ref}e_Y(U_B):e_Y(v_B)\dy\dx\\
	-\int_{\Omega}\int_{Y_A}\alpha_A^{ref}\theta_A:\big(\nabla v_A+\nabla_Y\widetilde{v}_A\big)\dy\dx
	-\int_{\Omega}\int_{Y_B}\alpha_B^{ref}\Theta_B:\nabla_Yv_B\dy\dx\\
	=\int_{\Omega}\int_{Y_A} f_{u_A}\cdot v_A\dy\di{x}
	+\int_{\Omega}\int_{Y_B} f_{u_B}\cdot v_B\dy\di{x}
	+\int_{\Omega}\int_{\Gamma}H_\Gamma^{ref}n\cdot v_A\ds\di{x}
\end{multline}
for all
$$
(v_A,\widetilde{v}_A,v_B)\in C^\infty_0(\Omega)\times C^\infty_0(\Omega;C^\infty_\#(Y))\times C^\infty_0(\Omega;C^\infty_\#(Y)).
$$
such that $v_A(x)=v_B(x,y)$ for all $(x,y)\in\Omega\times\Gamma$.
By density arguments, equation~\eqref{h:limit_coupled} holds also true for all $(v_A,\widetilde{v}_A,v_B)$, where $v_A\in W^{1,2}_0(\Omega)^3$ and $\widetilde{v}_A,v_B\in L^2(\Omega;W^{1,2}_\#(Y))^3$ such that $v_A(x)=v_B(x,y)$ for almost all $(x,y)\in\Omega\times\Gamma$.
As a next step, we are going to decouple the limit problem~\eqref{h:limit_coupled}.
For this goal, we choose $v_A\equiv0$ and $v_B\equiv0$.
We obtain:
\begin{multline}
\int_{\Omega}\int_{Y_A}\CC_A^{ref}\big(e(u_A)+e_Y(\widetilde{U}_A)\big):e_Y(\widetilde{v}_A)\dy\dx\\
	-\int_{\Omega}\int_{Y_A}\alpha_A^{ref}\theta_A:\nabla_Y\widetilde{v}_A\dy\dx
	=0
\end{multline}
for all $\widetilde{v}_A\in L^2(\Omega;W^{1,2}_\#(Y))^3$.

Now, letting $v_A\equiv0$ and forcing $v_B=0$ a.e.~on $\Omega\times\Gamma$, we get
\begin{multline}
\int_{\Omega}\int_{Y_B}\CC_B^{ref}e_Y(U_B):e_Y(v_B)\dy\dx
	-\int_{\Omega}\int_{Y_B}\alpha_B^{ref}\Theta_B:\nabla_Yv_B\dy\dx\\
	=\int_{\Omega}\int_{Y_B} f_{u_B}\cdot v_B\dy\di{x}
\end{multline}
for all $v_B\in L^2(\Omega;W_0^{1,2}(Y_B))^3$.
Next, while keeping $\widetilde{v}_A\equiv0$, we choose test functions such that $v_A(x)=v_B(x,y)$ for almost all $(x,y)\in\Omega\times Y_B$ (in particular, we have that $v_B$ is constant in $y\in Y$) and see that
\begin{multline}
\int_{\Omega}\int_{Y_A}\CC_A^{ref}\big(e(u_A)+e_Y(\widetilde{U}_A)\big):e(v_A)\dy\dx
	-\int_{\Omega}\int_{Y_A}\alpha_A^{ref}\theta_A:\nabla v_A\dy\dx\\
	=\int_{\Omega}\int_{Y_A} f_{u_A}\cdot v_A\dy\di{x}
	+\int_{\Omega}\int_{Y_B} f_{u_B}\cdot v_A\dy\di{x}
	+\int_{\Omega}\int_{\Gamma}H_\Gamma^{ref}n\cdot v_A\ds\di{x}.
\end{multline}
Summarizing, we obtain the following decoupled (with respect to the test functions) system of variational equalities:
\begin{subequations}\label{h:limit_decoupled}
\begin{multline}\label{h:limit_decoupled:1}
\int_{\Omega}\int_{Y_A}\CC_A^{ref}\big(e(u_A)+e_Y(\widetilde{U}_A)\big):e(v_A)\dy\dx
	-\int_{\Omega}\int_{Y_A}\alpha_A^{ref}\theta_A:\nabla v_A\dy\dx\\
	=\int_{\Omega}\int_{Y_A}f_{u_A}\cdot v_A\dy\di{x}
	+\int_{\Omega}\int_{Y_B}f_{u_B}\cdot v_A\dy\di{x}
	+\int_{\Omega}\int_{\Gamma}H_\Gamma^{ref}\overline{n}\cdot v_A\ds\di{x},
\end{multline}
\begin{multline}\label{h:limit_decoupled:2}
\int_{\Omega}\int_{Y_A}\CC_A^{ref}\big(e(u_A)+e_Y(\widetilde{U}_A)\big):e_Y(\widetilde{v}_A)\dy\dx\\
	-\int_{\Omega}\int_{Y_A}\alpha_A^{ref}\theta_A:\nabla_Y\widetilde{v}_A\dy\dx
	=0,
\end{multline}
\begin{multline}\label{h:limit_decoupled:3}
\int_{\Omega}\int_{Y_B}\CC_B^{ref}e_Y(U_B):e_Y(v_B)\dy\dx
	-\int_{\Omega}\int_{Y_B}\alpha_B^{ref}\Theta_B:\nabla_Yv_B\dy\dx\\
	=\int_{\Omega}\int_{Y_B} f_{u_B}\cdot v_B\dy\di{x}
\end{multline}
\end{subequations}
for all $\left(v_A,\widetilde{v}_A,v_B\right)\in W^{1,2}_0(\Omega)^3\times L^2(\Omega;W^{1,2}_\#(Y))^3\times L^2(\Omega;W_0^{1,2}(Y_B))^3$.
In addition to equations~\eqref{h:limit_decoupled:1}-\eqref{h:limit_decoupled:3}, we have the additional constraint $u_A(t,x)=U_B(t,x,y)$ for almost all $(t,x,y)\in S\times\Omega\times\Gamma$.

We go on by introducing cell problems and effective quantities to get a more accessible form of the homogenization limit.
For $j,k\in\{1,2,3\}$ and $y\in Y$, set $d_{jk}=(y_j\delta_{1k},y_j\delta_{2k},y_j\delta_{3k})^T$.
For $t\in S$, $x\in\Omega$, let $\tau^u_{jk}(t,x,\cdot)$, $\tau^u(t,x,\cdot)\in H^1_\per(Y_A)^3$ are the solutions to 
\begin{subequations}
\begin{align}
	0&=\int_{Y_A}\CC_A^{ref}e_Y(\tau^u_{jk}+d_{jk}):e_Y(\widetilde{v}_A)\dy,\\
	0&=\int_{Y_A}\CC_A^{ref}e_Y(\tau^u):e_Y(\widetilde{v}_A)\dy-\int_{Y_A}\alpha_A^{ref}:\nabla_Y\widetilde{v}_A\dy
\end{align}
for all $\widetilde{v}_A\in H^1_\per(Y_A)^3$.
In addition, we introduce the effective elasticity tensor $\CC_A^{eff}\colon S\times\Omega\to\R^{3\times3\times3\times3}$, $\CC_A^{eff}(t,x)=\left(\CC_A^{eff}(t,x)\right)_{1\leq i,j,k,l\leq3}$, via
\begin{equation}
	\left(\CC_A^{eff}\right)_{j_1j_2j_3j_4}=\int_{Y_A}\CC_A^{ref}e_Y\left(\tau_{j_1j_2}^u+d_{j_1j_2}\right) \colon e_Y\left(\tau_{j_3j_4}^u+d_{j_3j_4}\right)\dy.
\end{equation}
Furthermore, we introduce the following effective functions:
\begin{alignat}{2}
	H_{\Gamma}^{eff}&\colon S\times\Omega\to\R,\quad
	H_{\Gamma}^{eff}(t,x)=\int_{\Gamma}H_\Gamma^{ref}(t,x,s) n_0(t,x,s)\ds,\\
	f_u^{eff}&\colon S\times\Omega\to\R,\quad
	f_u^{eff}(t,x)=\int_{Y_A}f_{u_A}^{ref}(t,x,y)\dy+\int_{Y_B}f_{u_B}^{ref}(t,x,y)\dy,\\
	\alpha_A^{eff}&\colon S\times\Omega\to\R^{3\times3},\quad
	\alpha_A^{eff}(t,x)=\int_{Y_A}\left(\alpha_A^{ref}-C_A^{ref}e_Y\big(\tau^u\big)\right)\dy.
	\end{alignat}
\end{subequations}
We see that, at least up to function independent of $y\in Y$, it holds
$$
\widetilde{U_A}(t,x,y)=\sum_{j,k=1}^3\tau_{jk}^u(t,x,y)(e(u_A)(t,x))_{jk}+\tau^u(t,x,y)\theta_A(t,x).
$$
After transforming the microscopic mechanical part to moving coordinates, we are led to
\begin{subequations}\label{h:hom_mech}
\begin{align}
\begin{split}
\int_{\Omega}C_A^{eff}e(u_A):e(v_A)\di{x}
-\int_{\Omega}\alpha_A^{eff}\theta_A:\nabla v_A\dx\hspace{-0.5cm}\\
&=\int_{\Omega}f_u^{eff}\di{x}
	+\int_{\Omega}H_\Gamma^{eff}\di{x},\label{h:hom_mech:1}
	\end{split}\\
\begin{split}
\int_{Y_B(t,x)}\CC_B e_Y(u_B):e_Y(v_B)\dy
	-\int_{Y_B(t,x)}\alpha_B\theta_B\dive_Yv_B\dy\hspace{-2cm}\\
	&=\int_{Y_B(t,x)}f_{u_B}\cdot v_B\dy\label{h:hom_mech:2}
\end{split}
\end{align}
\end{subequations}
for all $v_A\in H_0^1(\Omega)^3$, $v_B\in H_0^1(Y_B(t,x))^3$ and almost all $t\in S$.

\subsection{Homogenization of the heat part}
Let $(v_A,\widetilde{v}_A)\in C^\infty(\overline{S}\times\overline{\Omega})\times C^\infty(\overline{S}\times\overline{\Omega};C^\infty_\#(Y))$ and $v_B\in C^\infty(\overline{S}\times\overline{\Omega};C^\infty_\#(Y))$ such that $v_A(T)=\widetilde{v}_A(T)=v_B(T)=0$ and such that $v_A(t,x)=v_B(t,x,y)$ for all $(t,x,y)\in S\times\Omega\times\Gamma$.
We introduce the functions 
\begin{align*}
v_A^\e&\colon S\times\Omega\to\R^3,\quad v_A^\e(t,x)=v_A(t,x)+\e\widetilde{v}_A\left(t,x,\frac{x}{\e}\right),\\
v_B^\e&\colon S\times\Omega\to\R^3,\quad v_B^\e(t,x)=v_B\left(t,x,\frac{x}{\e}\right)+\e\widetilde{v}_A\left(t,x,\frac{x}{\e}\right),\\
v^\e&\colon S\times\Omega\to\R^3,\quad
v^\e(t,x)=\begin{cases}v^\e_A(t,x),\ x\in\Omega_A,\\ v^\e_B(t,x),\ x\in\Omega_B.\end{cases}
\end{align*}
Then, $v^\e\in W^{1,2}(\Omega)$. 
Choosing $v^\e$ as a test function and letting $\e\to0$, we get, up to a subsequence, the following limit problem:
\begin{multline}
-\int_S\int_\Omega c_{dA}\left|Y_A\right|\theta_A\partial_tv_A\di{x}\dt
	-\int_\Omega c_{dA}\left|Y_A\right|\theta_{A0}\partial_tv_A(0)\di{x}\\
	-\int_S\int_\Omega\int_{Y_B} c_{B}^{ref}\Theta_B\partial_tv_B\dy\di{x}\dt
	-\int_{\Omega}\int_{Y_B}c_{dB}\theta_{B0}v_B(0)\dy\di{x}\\
	+\int_S\int_{\Omega}\int_{Y_B}c_{B}^{ref}v^{ref}\Theta_B\cdot\nabla_Y v_B\dy\di{x}\dt\\
	-\int_S\int_{\Omega}\int_{Y_A}\gamma_A^{ref}:\left(\nabla u_A+\nabla_Y\widetilde{U}_A\right)\partial_{t}v_A\dx\dt
	-\int_S\int_{\Omega}\int_{Y_B}\gamma_B^{ref}:\nabla_Y U_B\partial_{t}v_B\dy\dx\dt\\
	+\int_S\int_{\Omega}\int_{Y_B}v^{ref}\left(\gamma_B^{ref}:\nabla_Y U_B^\e\right)\cdot\nabla_Y v_B\dy\dx\dt\\
	+\int_S\int_{\Omega}\int_{Y_A}K_A^{ref}\left(\nabla\theta_A+\nabla_Y\widetilde{\Theta}_A\right)\cdot\left(\nabla v_A+\nabla_Y\widetilde{v}_A\right)\dy\di{x}\dt\\
	+\int_S\int_{\Omega}\int_{Y_B}K_B^{ref}\nabla_Y\Theta_B\cdot\nabla_Yv_B\dy\di{x}\dt
	+\int_S\int_{\Omega}\int_{\Gamma}W_\Gamma^{ref}v_A\di{s}\di{x}\dt\\
	=\int_S\int_{\Omega}\int_{Y_A}f_{\theta_A}^{ref}v_A\dy\di{x}\dt
	+\int_S\int_{\Omega}\int_{Y_B}f_{\theta_B}^{ref}v_B\dy\di{x}\dt	
\end{multline}
for all $(v_A,\widetilde{v}_A)\in C^\infty(\overline{S}\times\overline{\Omega})\times C^\infty(\overline{S}\times\overline{\Omega};C^\infty_\#(Y))$ and $v_B\in C^\infty(\overline{S}\times\overline{\Omega};C^\infty_\#(Y))$ such that $v_A(T)=\widetilde{v}_A(T)=v_B(T)=0$ and such that $v_A(t,x)=v_B(t,x,y)$ for all $(t,x,y)\in S\times\Omega\times\Gamma$.
Here, $\left|Y_A\right|=\left|Y_A(t,x)\right|$.

Using the same decoupling strategy as for the mechanical part, we obtain the following system of variational equalities:
\begin{subequations}\label{h:hom_decoupled}
\begin{multline}\label{h:hom_decoupled:1}
-\int_S\int_\Omega\rho_Ac_{dA}\left|Y_A\right|\theta_A\partial_tv_A\di{x}\dt
	-\int_\Omega\rho_A c_{dA}\left|Y_A\right|\theta_{A0}\partial_tv_A(0)\di{x}\\
	-\int_S\int_\Omega\left(\int_{Y_B} c_{B}^{ref}\Theta_B\dy\right)\partial_tv_A\di{x}\dt
	-\int_{\Omega}\left(\int_{Y_B}c_{dB}\theta_{B0}\dy\right)v_A(0)\di{x}\\
	-\int_S\int_{\Omega}\left(\int_{Y_A}\gamma_A^{ref}:\left(\nabla u_A+\nabla_Y\widetilde{U}_A\right)\dy+\int_{Y_B}\gamma_B^{ref}:\nabla_Y U_B\dy\right)\partial_{t}v_A\dx\dt\\
	+\int_S\int_{\Omega}\int_{Y_A}K_A^{ref}\left(\nabla\theta_A+\nabla_Y\widetilde{\Theta}_A\right)\cdot\nabla v_A\dy\di{x}\dt
	+\int_S\int_{\Omega}\left(\int_{\Gamma}W_\Gamma^{ref}\di{s}\right)v_A\di{x}\dt\\
	=\int_S\int_{\Omega}\left(\int_{Y_A}f_{\theta_A}^{ref}\dy\right)v_A\di{x}\dt
	+\int_S\int_{\Omega}\left(\int_{Y_B}f_{\theta_B}^{ref}\dy\right)v_A\di{x}\dt,
\end{multline}
\begin{align}\label{h:hom_decoupled:2}
\int_S\int_{\Omega}\int_{Y_A}K_A^{ref}\left(\nabla\theta_A+\nabla_Y\widetilde{\Theta}\right)\cdot\nabla_Y\widetilde{v}_A\dy\di{x}\dt=0,
\end{align}
\begin{multline}\label{h:hom_decoupled:3}
	-\int_S\int_\Omega\int_{Y_B} c_{B}^{ref}\Theta_B\partial_tv_B\dy\di{x}\dt
	-\int_{\Omega}\int_{Y_B}c_{dB}\theta_{B0}v_B(0)\dy\di{x}\\
	+\int_S\int_{\Omega}\int_{Y_B}\rho_Bc_{dB}v^{ref}\Theta_B\cdot\nabla_Yv_B\dy\di{x}\dt\\
	-\int_S\int_{\Omega}\int_{Y_B}\gamma_B^{ref}:\nabla_Y U_B\partial_{t}v_B\dy\dx\dt
	+\int_S\int_{\Omega}\int_{Y_B}v^{ref}\left(\gamma_B^{ref}:\nabla_Y U_B\right)\cdot\nabla_Yv_B\dy\dx\dt\\
	+\int_S\int_{\Omega}\int_{Y_B}K_B^{ref}\nabla_Y\Theta_B\cdot\nabla_Yv_B\dy\di{x}\dt
	=\int_S\int_{\Omega}\int_{Y_B}f_{\theta_B}^{ref}v_B\dy\di{x}\dt
\end{multline}
\end{subequations}
for all $(v_A,\widetilde{v}_A,v_B)\in L^2(S;W^{1,2}(\Omega))\times L^2(S\times\Omega;W_\#^{1,2}(Y))\times L^2(S\times\Omega;W_0^{1,2}(Y_B))$ such that $(\partial_tv_A,\partial_tv_B)\in L^2(S;(W^{1,2}(\Omega)'))\times L^2(S\times\Omega;W^{-1,2}(Y_B))$ and such that $v_A(T)=v_B(T)=0$.

Now, we want to find a more accessible description of the homogenized problem given via equations~\eqref{h:hom_decoupled:1}-\eqref{h:hom_decoupled:3}.
With that in mind, for $j\in\{1,2,3\}$, $t\in S$, $x\in\Omega$, let $\tau_j^\theta(t,x,\cdot)\in H^1_\per(Y_A)$ be the solution to 
\begin{equation}
	\int_{Y_A}K_A^{ref}\left(\nabla_Y\tau_j^\theta+e_j\right)\cdot\nabla_Y\widetilde{v}_A\dy=0,\quad\widetilde{v}_A\in W^{1,2}_\per(Y_A).
\end{equation}
We introduce the following effective functions 
\begin{alignat*}{3}
c^{eff}&\colon S\times\Omega\to\R,&\quad W_{\Gamma}^{eff}&\colon S\times\Omega\to\R,&\quad \gamma_A^{eff}&\colon S\times\Omega\to\R^{3\times3},\\
K_A^{eff}&\colon S\times\Omega\to\R^{3\times3},&\quad f_\theta^{eff}&\colon S\times\Omega\to\R
\end{alignat*}
defined via
\begin{subequations}\label{h:hom_effective}
\begin{align}
c^{eff}(t,x)&=\rho_Ac_{dA}\left|Y_A(t,x)\right|+\alpha_A\int_{Y_A}\dive_Y\left(\tau_1^m\right)(t,x,y)\dy,\\
K_A^{eff}(t,x)_{ij}&=\int_{Y_A}K_A^{ref}(t,x,y)\left(\nabla_Y\tau_j^\theta(t,x,y)+e_j\right)\cdot\left(\nabla_Y\tau_i^\theta(t,x,y)+e_i\right),\\
W_{\Gamma}^{eff}(t,x)&=\int_{\Gamma}W_\Gamma^{ref}(t,x,s)\di{s},\\
f_\theta^{eff}(t,x)&=\int_{Y_A}f_{\theta_A}^{ref}(t,x,y)\dy+\int_{Y_B}f_{\theta_B}^{ref}(t,x,y)\dy\\
\gamma_A^{eff}(t,x)&=\int_Y\left(\gamma_A^{ref}+\gamma_A\nabla_Y\tau_{jk}^{m}(t,x,y)\right)\dy+\gamma_B\left|Y_B(t,x)\right|\mathds{I}_3.
\end{align}
\end{subequations}
The system of variational equalities~\eqref{h:hom_decoupled:1}-\eqref{h:hom_decoupled:3} then reads
\begin{subequations}\label{h:hom_variational}
\begin{multline}\label{h:hom_variational:1}
-\int_S\int_\Omega c^{eff}\theta_A\partial_tv_A\di{x}\dt
	-\int_\Omega c^{eff}(0)\theta_{A0}\partial_tv_A(0)\di{x}\\
	-\int_S\int_\Omega\left(\int_{Y_B(t,x)}\rho_Bc_{dB}\theta_B\dy\right)\partial_tv_A\di{x}\dt
	-\int_{\Omega}\left(\int_{Y_B(0)}\rho_Bc_{dB}\theta_{B0}\dy\right)v_A(0)\di{x}\\
	-\int_S\int_{\Omega}\gamma_A^{eff}:\nabla u_A\partial_{t}v_A\dx\dt
	+\int_S\int_{\Omega}K_A^{eff}\nabla\theta_A\cdot\nabla v_A\dy\di{x}\dt\\
	=-\int_S\int_{\Omega}W_{\Gamma}^{eff}v_A\di{x}\dt
	+\int_S\int_{\Omega}f_\theta^{eff}\di{x}\dt,
\end{multline}
\begin{multline}\label{h:hom_variational:2}
	-\int_S\int_{Y_B} c_{B}^{ref}\Theta_B\partial_tv_B\dy\dt
	+\int_S\int_{Y_B}\rho_Bc_{dB}v^{ref}\Theta_B\cdot\nabla_Yv_B\dy\dt
	-\int_{\Omega}\int_{Y_B}c_{dB}\theta_{B0}v_B(0)\dy\\
	-\int_S\int_{Y_B}\gamma_B^{ref}:\nabla_Y U_B\partial_{t}v_B\dy\dt
	+\int_S\int_{Y_B}v^{ref}\left(\gamma_B^{ref}:\nabla_Y U_B^\e\right)\cdot\nabla_Yv_B\dy\dt\\
	+\int_S\int_{Y_B}K_B^{ref}\nabla_Y\Theta_B\cdot\nabla_Yv_B\dy\di{x}\dt
	=\int_S\int_{Y_B}f_{\theta_B}^{ref}v_B\dy\dt
\end{multline}
\end{subequations}
for all $(v_A,v_B)\in L^2(S;W^{1,2}(\Omega))\times L^2(S\times\Omega;W_0^{1,2}(Y_B))$ such that $(\partial_tv_A,\partial_tv_B)\in L^2(S;(W^{1,2}(\Omega)'))\times L^2(S\times\Omega;W^{-1,2}(Y_B))$ and such that $v_A(T)=v_B(T)=0$.

Finally, we are able to present the complete homogenized problem of the initial highly heterogeneous $\e$-problem given by equations~\eqref{p:full_problem_moving:1}-\eqref{p:full_problem_moving:9}.
We transform the variational equations~\eqref{h:hom_variational:2} to the moving domain formulation and combine the homogenized mechanical system (equations~\eqref{h:hom_mech:1},~\eqref{h:hom_mech:2}) and the homogenized thermo system (equations~\eqref{h:hom_variational:1},~\eqref{h:hom_variational:2}).
Via localization, this results in the following two-scale system of partial differential equations (complemented by initial conditions and macroscopic boundary conditions)
\begin{subequations}
\begin{alignat}{2}
-\dive\left(\CC_A^{eff}e(u_A)-\alpha_A^{eff}\theta_A\right)&=f_u^{eff}+H_\Gamma^{eff}\quad &&\text{in}\ \ S\times\Omega,\label{h:hom_prob:1}\\
\partial_t\Bigg(c^{eff}\theta_A+\rho_Bc_{dB}\int_{Y_B(t,x)}\theta_B\dy+\gamma_A^{eff}:&\nabla u_A\Bigg)\notag\\
-\dive\left(K_A^{eff}\nabla\theta_A\right)&=f_\theta^{eff}-W_{\Gamma}^{eff}\quad &&\text{in}\ \ S\times\Omega\label{h:hom_prob:2},
\end{alignat}
\begin{alignat}{2}
-\dive_Y\left(\CC_B e_Y(u_B)-\alpha_B\theta_B\mathds{I}_3\right)&=f_{u_B}\ \ &&\text{in}\ \ Y_B(t,x)\label{h:hom_prob:3},\\
\rho_Bc_{dB}\partial_t\theta_B+\gamma_B\partial_t\dive_Yu_B-\dive_Y\left(K_B\nabla_Y\theta_B\right)&=f_{\theta_B}\ \ &&\text{in}\ \ Y_B(t,x),\label{h:hom_prob:4}\\
u_B=u_A,\ \ \theta_B&=\theta_A\quad &&\text{on}\ \ \partial Y_B(t,x).\label{h:hom_prob:5}
\end{alignat}
\end{subequations}
This homogenized model is a typical example of what is usually called a \emph{distributed-microstructure} model~\cite{S93}. In simple words this means that  on the one hand, we have obtained an averaged macroscopic description of the coupled thermoelasticity, that is equations~\eqref{h:hom_prob:1} and~\eqref{h:hom_prob:2}, while  on the other hand, these averaged equations are, at every point $x\in\Omega$, additionally coupled with an $x$-parametrized microscopic problem, see equations~\eqref{h:hom_prob:3}-\eqref{h:hom_prob:5}.

The coupling between the two-scales (microscopic and macroscopic), again, is two-fold: a) Via the \emph{Dirichlet}-boundary condition on $\partial Y_B(t,x)$ (equation~\eqref{h:hom_prob:5}), which is a direct consequence of the continuity conditions posed on the phase-interface of the $\e$-microproblem, the macroscopic quantities determine the boundary values of the microscopic quantities.
b) In contrast, in the macroscopic heat equation, we see that the average of the microscopic heat density, i.e.,
$$\rho_Bc_{dB}\int_{Y_B(t,x)}\theta_B\dy$$ 
is part of the overall heat density.
In the case of $\gamma_i=0$, i.e., when there is no dissipation, the \emph{overall effective heat density} $e^{eff}=e^{eff}(t,x)$ would then be given as
$$e^{eff}=c^{eff}\theta_A+\rho_Bc_{dB}\int_{Y_B(t,x)}\theta_B\dy$$
This seems to suggest that equation~\eqref{h:hom_prob:2} should, actually, be interpreted as a balance equation for the so-called overall heat density, where part of the balanced quantity, the microscopic temperature $\theta_B$, is given as a solution to the microscopic heat balance equation.

In the homogenization limit, the phase transformation is a purely microscopic phenomenon, where we have the free boundary $\partial Y_B(t,x)$.
However, the transformation does also turn up in the macroscopic part, where it enters via the volume force densities \emph{effective mean curvature} $H_\Gamma^{eff}$ and the \emph{effective normal velocity} $W_{\Gamma}^{eff}$.

\section*{Acknowledgments}
The authors are indebted to Michael B\"ohm (Bremen) for initiating and supporting this research.
AM thanks NWO MPE ``Theoretical estimates of heat losses in geothermal wells'' (grant nr. 657.014.004) for funding.

\bibliography{literature}{}
\bibliographystyle{plain}

\medskip
\medskip

\end{document}